\documentclass[12pt]{article}

\usepackage{amssymb}
\usepackage{latexsym}
\usepackage{amsmath}
\usepackage{indentfirst}
\usepackage{graphicx}
\usepackage{subfigure}
\usepackage{float}
\usepackage{color}
\usepackage{hyperref}
\usepackage{orcidlink}
\usepackage[compress]{cite}

\title{Size Ramsey minimal graphs for uniform star forests}
\author{Pingting Fu \thanks{School of Mathematics and Statistics, Hainan University, Haikou, Hainan 570028, P. R. China. Email: fpt\_inya@163.com}
\and Zhidan Luo \orcidlink{0009-0005-6195-147X} \thanks{Corresponding author. School of Mathematics and Statistics, Hainan University, Haikou, Hainan 570028, P. R. China. Research supported in part by National Natural Science Foundation of China (No.12401449), Hainan Provincial Natural Science Foundation of China (No.125QN209) and Hainan University Research Foundation Project (No. KYQD(ZR)-23155). Email: luodan@hainanu.edu.cn}
\and Zhenyu Ni \thanks{School of Mathematics and Statistics, Hainan University, Haikou, Hainan 570028, P. R. China. Email: 995264@hainanu.edu.cn}}
\date{}

\newtheorem{theo}{Theorem}[section]

\newtheorem{remark}[theo]{Remark}
\newtheorem{lemma}[theo]{Lemma}

\newtheorem{fact}[theo]{Fact}

\newtheorem{conj}[theo]{Conjecture}

\def\q{\hspace*{\fill}$\Box$\medskip}

\begin{document}
\maketitle
\begin{abstract}
  For given graphs $G_{1}, G_{2}, \dots, G_{t}$ and $G$, let $G\rightarrow (G_{1}, G_{2}, \dots, G_{t})$ denote that each $t$-coloring of $E(G)$ yields a monochromatic copy of $G_{i}$ in color $i$ for some $i\in [t]$. The {\it size Ramsey number}, $\hat{r}(G_{1}, G_{2}, \dots, G_{t})$ is the minimum size of $G$ such that $G\rightarrow (G_{1}, G_{2}, \dots, G_{t})$. A graph $G$ is a {\it size Ramsey minimal graph} for $(G_{1}, G_{2}, \dots, G_{t})$ if $G\rightarrow (G_{1}, G_{2}, \dots, G_{t})$ and $e(G)= \hat{r}(G_{1}, G_{2}, \dots, G_{t})$. A {\it star forest} is a vertex-disjoint union of stars, and a {\it uniform star forest} is a star forest with the same size of each component. In 1978, Burr, Erd\H{o}s, Faudree, Rousseau and Schelp, and in 2025, Davoodi, Javadi, Kamranian and Raeisi completely characterized the size Ramsey minimal graphs for uniform star forests. In this paper, we completely characterize the size Ramsey minimal graphs for uniform star forests in multicolors.
  
  {\bf Keywords}: size Ramsey number, size Ramsey minimal graph, star forests
  
  {\bf MSC2020}: 05C55, 05D10
\end{abstract}

\section{Introduction}
  In this paper, all graphs are simple. Moreover, we ignore isolated vertices. Let $V(G)$ and $E(G)$ be the vertex set and the edge set of $G$, respectively. The {\it size} of $G$ is $|E(G)|$ and the {\it order} of $G$ is $|V(G)|$. Denote them by $e(G)$ and $v(G)$, respectively. For given graphs $G_{1}, G_{2}, \dots, G_{t}$ and $G$, let $G\rightarrow (G_{1}, G_{2}, \dots, G_{t})$ denote that each $t$-coloring of $E(G)$ yields a monochromatic copy of $G_{i}$ in color $i$ for some $i\in [t]$. The {\it size Ramsey number}, $\hat{r}(G_{1}, G_{2}, \dots, G_{t})$ is the minimum size of $G$ such that $G\rightarrow (G_{1}, G_{2}, \dots, G_{t})$, that is
  $$\hat{r}(G_{1}, G_{2}, \dots, G_{t})= \min\{e(G): G\rightarrow (G_{1}, G_{2}, \dots, G_{t})\}.$$
  If $G\rightarrow (G_{1}, G_{2}, \dots, G_{t})$ and $e(G)= \hat{r}(G_{1}, G_{2}, \dots, G_{t})$, then we call $G$ a {\it size Ramsey minimal graph} for $(G_{1}, G_{2}, \dots, G_{t})$. For results concerning size Ramsey number, we refer the reader to \cite{bls,nk,ErdosFaudreeRousseauSchelp,FaudreeSchelp,JavadiOmidi,Pikhurko,k} and references therein.
  
  For graphs $G$ and $H$, let $G\sqcup H$ be the vertex-disjoint union of $G$ and $H$, and let $tG$ be the vertex-disjoint union of $t$ copies of $G$. For given positive integers $n, n_{1}, n_{2}, \dots, n_{t}$, let $K_{1, n}$, $\bigsqcup_{i= 1}^{t} K_{1, n_{i}}$ and $\bigsqcup_{i= 1}^{t} K_{1, n}$ be the {\it star}, the {\it star forest} and the {\it uniform star forest}, respectively. In 1978, Burr, Erd\H{o}s, Faudree, Rousseau and Schelp considered the size Ramsey number for star forests and conjectured the following.
  
  \begin{conj}[Burr, Erd\H{o}s, Faudree, Rousseau, Schelp \cite{BurrErdosFaudreeRousseauSchelp}]\label{conj1.1}
    For given positive integers $s$ and $t$, let $n_{1}\geq n_{2}\geq \cdots\geq n_{s}\geq 1$ and $m_{1}\geq m_{2}\geq \cdots\geq m_{t}\geq 1$ be integers. For each $k\in [s+ t]\backslash [1]$, let $\ell_{k}= \max\left\{n_{i}+ m_{j}- 1: i+ j= k\right\}$. Then 
    $$\hat{r}\left(\bigsqcup_{i= 1}^{s} K_{1, n_{i}}, \bigsqcup_{j= 1}^{t} K_{1, m_{j}}\right)= \sum_{k= 2}^{s+ t} \ell_{k}.$$
  \end{conj}
  
  \noindent In the same paper, they confirmed Conjecture \ref{conj1.1} for uniform star forests. Moreover, they characterized the size Ramsey minimal graphs for uniform star forests. 
  
  \begin{theo}[Burr, Erd\H{o}s, Faudree, Rousseau, Schelp \cite{BurrErdosFaudreeRousseauSchelp}]\label{theo1.2}
    For given positive integers $s$, $t$, $m$ and $n$, $\hat{r}(sK_{1, n}, tK_{1, m})= (s+ t- 1)(m+ n- 1)$. If $G$ is a size Ramsey minimal graph for $(sK_{1, n}, tK_{1, m})$, then $G= (s+ t- 1)K_{1, m+ n- 1}$. Moreover, if $m= n= 2$, then also $G= cK_{3}\sqcup (s+ t- c- 1)K_{1, 3}$ for some $c\in [s+ t- 1]$. 
  \end{theo}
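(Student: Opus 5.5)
We need to prove $\hat r(sK_{1,n},tK_{1,m})=(s+t-1)(m+n-1)$ and determine all minimal graphs.

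\textbf{Upper bound.} Let $G=(s+t-1)K_{1,m+n-1}$ and fix a red/blue colouring. In each star, pigeonhole at the centre gives either at least $n$ red edges or at least $m$ blue edges. So each of the $s+t-1$ stars contains a red $K_{1,n}$ or a blue $K_{1,m}$. Pigeonhole over the stars then gives $s$ disjoint red $K_{1,n}$'s or $t$ disjoint blue $K_{1,m}$'s. For $m=n=2$, a triangle coloured with three edges behaves like $K_{1,3}$: it contains a monochromatic $P_3=K_{1,2}$. The same argument therefore works for $cK_3\sqcup(s+t-c-1)K_{1,3}$.

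\textbf{Lower bound and characterization.} I would prove both together by induction on $s+t$. The induction statement is: if $G\to(sK_{1,n},tK_{1,m})$, then $e(G)\ge(s+t-1)(m+n-1)$, with equality only for the listed graphs.

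\emph{Base case.} When $s=t=1$, the graph $G$ must have a vertex of degree at least $m+n-1$. Otherwise, at each vertex of degree at most $m+n-2$ we want at most $n-1$ red and at most $m-1$ blue edges. Such a colouring exists: split $E(G)$ by a bipartite-degree-constrained argument, namely a Petersen/Vizing-type equitable edge colouring giving each vertex roughly proportional red and blue degrees. So $e(G)\ge m+n-1$. Equality forces a single star, except when $m=n=2$: there $\Delta\ge3$ is not forced, and avoiding a monochromatic $P_3$ requires every component to be properly 2-edge-colourable. With $3$ edges the non-arrowing graphs are paths and even cycles, so $K_3$ and $K_{1,3}$ are exactly the graphs that arrow.

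\emph{Induction step.} Assume $G\to(sK_{1,n},tK_{1,m})$ with $s+t\ge3$, say $s\ge2$. Pick a vertex $v$ and consider $G'=G-N[v]$-type deletions. Delete a suitable set $F$ of at most $m+n-1$ edges so that $G-F\to((s-1)K_{1,n},tK_{1,m})$ or $G-F\to(sK_{1,n},(t-1)K_{1,m})$; induction then gives $e(G)\ge e(G-F)+|F|$.

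The natural choice is $F=$ all edges at a max-degree vertex $v$. Suppose $G-E(v)$ has a bad colouring $\chi$ for $((s-1)K_{1,n},tK_{1,m})$. Extend $\chi$ by colouring the edges at $v$ red. Then any red $sK_{1,n}$ in $G$ uses at most one star touching $v$. Removing it leaves $s-1$ red stars in $G-E(v)$, unless the edges of $v$ were used by several stars as leaves. This is the main obstacle: $v$ can serve as a leaf of many red stars.

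To handle it, I would instead delete edges incident to $v$ only if $d(v)\ge m+n-1$. If $\Delta(G)\le m+n-2$, use the equitable colouring from the base case to get no monochromatic stars at all, which is a contradiction. If $d(v)>m+n-1$, then $e(G)$ strictly exceeds the bound once induction is applied to $G-E(v)$. For the leaf issue, I would choose the colouring of $v$'s edges so that each edge $vu$ gets the colour in which $u$ is not a centre-capable vertex under $\chi$. Each $u$ has red degree $\le n-1$ or blue degree $\le m-1$ in the maximal sense, which prevents new stars centred at $u$.

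\textbf{Equality analysis.} In the equality case every step is tight: each removal deletes exactly a component equal to $K_{1,m+n-1}$, or for $m=n=2$ a $K_3$ or $K_{1,3}$. This shows $G$ is a disjoint union of such pieces. The careful recolouring to avoid new stars at neighbours of $v$ is where I expect most of the work, especially showing that tightness forces the deleted edges to form a whole component.
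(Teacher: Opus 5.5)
Your upper bound, including the triangle case for $m=n=2$, is correct. The lower bound and the characterization have two genuine gaps. First, the characterization is false as stated, and the paper says so: Theorem 1.2 misses a case, supplied later by Davoodi et al.\ (Theorem 1.3). For $s=1$, $n=2$, $m=1$ (and symmetrically), the graphs $cC_4\sqcup(t-2c)K_{1,2}$ are also minimal. For instance, $C_4\to(K_{1,2},2K_{1,1})$ and $e(C_4)=4=\hat r(K_{1,2},2K_{1,1})$, because the complement of any matching of $C_4$ contains two disjoint edges. Your equality step (``tightness forces the deleted edges to form a whole component'') is exactly what breaks. Deleting a vertex $v$ of $C_4$ leaves a $K_{1,2}$, which is minimal for $(K_{1,2},K_{1,1})$, yet both its leaves are neighbours of $v$. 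The paper does not prove Theorem 1.2 itself; its Theorem 1.5 contains the corrected two-colour statement by Remark 1.6. There, Case 1 of Theorem 3.6 needs $b=m+n-1\ge 3$. After all edges at a shared vertex $u$ receive one colour, the rest of $G[N[v]\cup A]$ must split into $b-1$ matchings, and this fails for $b=2$. That case is handled separately in Theorem 3.4, and it is what produces the $C_4$'s.

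Second, your claim that $\Delta(G)\le m+n-2$ always allows a colouring with red degrees at most $n-1$ and blue degrees at most $m-1$ is false. Vizing only gives it for $\Delta\le m+n-3$ (Fact 2.1). One counterexample is $K_3$ for $m=n=2$: you exclude it in the base case, but you use the claim without exception in the induction step, where $(s+t-1)K_3$ has $\Delta=2$. More generally, $K_{m+n-1}$ arrows $(K_{1,n},K_{1,m})$ whenever $m$ and $n$ are both even, since the red graph would have to be $(n-1)$-regular on an odd number of vertices. So the case $\Delta=m+n-2$ is left open in both your lower bound and your classification.

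The paper takes the value of $\hat r$ from Zhang's theorem and treats $\Delta=b-1$ in Case 2 of Theorem 3.6 as follows:
\begin{itemize}
\item It repeatedly deletes a vertex of degree $b-1$; Lemma 2.2 and Fact 2.1 keep the maximum degree at $b-1$. This yields an independent set $U$ of $s+t-1$ such vertices.
\item K\"onig's theorem colours $G[U,W]$ by $(n-1)+(m-1)$ matchings.
\item Fact 2.3 then forces $e(G[W])\ge s+t-1$, which is exactly the lower bound you need.
\item Under equality, a recolouring argument (Subcases 2.1--2.2) eliminates this case; the $m=n=2$ situation, where only $(s+t-1)K_3$ survives, is done in Theorem 3.5.
\end{itemize}
Conversely, the obstacle you single out is not real. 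The stars of $sK_{1,n}$ are vertex-disjoint, so $v$ lies in at most one of them, and colouring every edge at $v$ red already works (Lemma 2.2). No ``centre-capable'' recolouring is needed.
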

  
  In 2002, Gy\H{o}ri and Schelp \cite{GyoriSchelp} confirmed Conjecture \ref{conj1.1} under the condition ${\ell_{i}\choose 2}\geq \sum_{k= i}^{s+ t} \ell_{k}$ for each $i\in [s+ t]\backslash [1]$. After that, Conjecture \ref{conj1.1} has no progress until 2025. Davoodi, Javadi, Kamranian and Raeisi \cite{Davoodi} confirmed Conjecture \ref{conj1.1} for several cases ($n_{i}$ and $m_{j}$ are odd, or $s= 1$, and more), and completely characterized the size Ramsey minimal graphs for $(sK_{1, n}, tK_{1, m})$ since there is a missing case in Theorem \ref{theo1.2}. 
  
  \begin{theo}[Davoodi, Javadi, Kamranian and Raeisi \cite{Davoodi}]\label{theo1.3}
    If $G$ is a size Ramsey minimal graph for $(sK_{1, n}, tK_{1, m})$, then $G= (s+ t- 1)K_{1, m+ n- 1}$. Moreover, if $m= n= 2$, then also $G= cK_{3}\sqcup (s+ t- c- 1)K_{1, 3}$ for some $c\in [s+ t- 1]$; if $s= 1, n= 2$ and $m= 1$, then also $G= cC_{4}\sqcup (t- 2c)K_{1, 2}$ for some $c\in [\lfloor t/2\rfloor]$.
  \end{theo}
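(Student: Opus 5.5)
We take $\hat{r}(sK_{1,n}, tK_{1,m})=(s+t-1)(m+n-1)$ from Theorem \ref{theo1.2}. So let $G$ be a size Ramsey minimal graph, with $e(G)=(s+t-1)(m+n-1)$, and write $N=m+n-1$. The proof has three steps: show that each listed graph arrows the pair, derive the degree and component structure of $G$ by induction on $s+t$, and then identify $G$ by a local analysis, which is where the exceptional cases come from.

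\textbf{Sufficiency.} For $(s+t-1)K_{1,N}$, each star $K_{1,N}$ contains at least $n$ edges of color 1 or at least $m$ edges of color 2. With $s+t-1$ stars, pigeonhole gives either $s$ monochromatic red $K_{1,n}$'s or $t$ blue $K_{1,m}$'s, and these are vertex-disjoint since the stars are. For the exceptional graphs the checks are small.
\begin{itemize}
\item When $m=n=2$, any $2$-coloring of $K_3$ contains a monochromatic $P_3=K_{1,2}$.
\item When $n=2$, $m=1$, $s=1$: either some edge of $C_4$ is blue, or the two color classes of $C_4$ are handled by counting. If there is no red $K_{1,2}$, the red edges form a matching. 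A red matching of size at most $2$ in $C_4$ leaves at least $2$ blue edges, and these form $2$ disjoint $K_{1,1}$'s. So each $C_4$ contributes $2$ to the blue count, and each $K_{1,2}$ contributes $1$.
\end{itemize}

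\textbf{Necessity, induction step.} We induct on $s+t$. The base case $s=t=1$ is $\hat r(K_{1,n},K_{1,m})$: here $G$ must have a vertex of degree at least $N$, otherwise orient/split edges at each vertex. The standard argument is that if $\Delta(G)\le N-1$, a Petersen-type decomposition gives a coloring with red degree at most $n-1$ and blue degree at most $m-1$. This step is delicate when $\Delta=N-1$ is achieved with parity issues, and it is exactly where $K_3$ arises ($n=m=2$, $N=3$ but $K_3$ has only $3$ edges). For the induction step, pick a vertex $v$ of maximum degree; by the same coloring argument, $d(v)\ge N$. Color all edges at $v$ except a suitable set of $N$ edges so that the rest of $G$ is forced to contain both a red $(s-1)K_{1,n}$ structure avoiding $v$ and a blue $(t-1)$ one.

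\textbf{Identifying $G$.} More cleanly, we argue that $G-v$ (or $G$ minus a star at $v$) must arrow $((s-1)K_{1,n}, tK_{1,m})$ or $(sK_{1,n},(t-1)K_{1,m})$. For this, color the star at $v$ with $n$ red and $m-1$ blue edges, or symmetrically, and use that $v$ alone can host at most one monochromatic star. Then $e(G-v)\ge (s+t-2)N$, which forces $d(v)=N$ and equality throughout. By induction, $G-v$ is one of the minimal graphs. Reattaching $v$ with exactly $N$ edges, we then show the edges at $v$ cannot meet other components' vertices without creating a coloring that avoids both targets. The idea is to recolor so that a star of $G-v$ sharing a vertex with $N(v)$ loses its monochromatic star. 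This gives $G=(s+t-1)K_{1,N}$, with the exceptions when small components allow cycles: the triangles when $n=m=2$, and the pairing of two $K_{1,2}$'s into a $C_4$ when $s=1,n=2,m=1$.

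\textbf{Main obstacle.} The hardest step is this last one: showing that the $N$ edges at $v$ attach only as a new star component, and exhaustively classifying the small-parameter exceptions. In particular we must rule out other gluings, such as $C_5$ or $K_4$ minus an edge, when $N\le 3$. I would first handle $N\ge 4$ generically, then treat $N\in\{2,3\}$, i.e. $(n,m)\in\{(1,2),(2,1),(2,2),(1,3),(3,1)\}$, by direct case analysis on graphs with maximum degree at most $3$.
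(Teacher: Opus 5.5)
Your sufficiency checks are fine, but the necessity argument rests on a false step. In the induction step you assert that a maximum-degree vertex has $d(v)\ge N$ ``by the same coloring argument''. That argument amounts to claiming that $\Delta(G)\le N-1$ always allows a coloring with red degrees at most $n-1$ and blue degrees at most $m-1$, and this is wrong. You note $K_3$ yourself, and $(s+t-1)K_3$ is a minimal graph with $\Delta=N-1$. More generally, $K_{n+m-1}\rightarrow(K_{1,n},K_{1,m})$ when $n,m$ are both even (the red degree sum would be odd), and any cubic graph without a perfect matching arrows $(K_{1,2},K_{1,3})$. A degree-only argument gives just $\Delta(G)\ge N-1$ (Vizing, as in Fact \ref{fact3.1}). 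Excluding $\Delta(G)=N-1$ must use $e(G)=(s+t-1)N$, and your induction cannot reach this case: deleting a vertex of degree $N-1$ leaves $(s+t-2)N+1$ edges, so $G-v$ is not minimal and the hypothesis does not apply.

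The paper obtains Theorem \ref{theo1.3} from Theorem \ref{theo1.5} by padding with $K_{1,1}$ colors (Remark \ref{remark1.6}), and handles $\Delta=N-1$ without induction in Case 2 of Theorem \ref{theo3.6}:
\begin{itemize}
\item It greedily extracts an independent set $U$ of $s+t-1$ vertices of degree $N-1$. Then $G[W]$ has exactly $s+t-1$ edges, and these must form a matching, since two incident ones could share a color $i$ with $a_i\ge 2$ (Fact \ref{fact2.3}).
\item A K\"onig coloring of the bipartite $G[U,W]$ by $N-1$ matchings, split into $n-1$ red and $m-1$ blue, is then adjusted (swapping two matchings, or recoloring a $P_4$) so that one edge $u_iw_i$ of $G[W]$ can be added without creating a star.
\item For $n=m=2$, the case $\Delta=2$ is settled by a count on paths and cycles: two maximum matchings leave one edge per odd cycle, which forces $(s+t-1)K_3$.
\end{itemize}
Your base case has the same defect. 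With $e(G)=N$ and $\Delta\ge N-1$ one must still color $K_{1,N-1}\sqcup K_{1,1}$ and $K_{1,N-1}$ plus one edge meeting it; this is where $P_4$ is excluded and $K_3$ survives (Lemma \ref{lemma3.2}).

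Two further steps fail or are missing. First, to get $d(v)\le N$ you color $n$ edges at $v$ red and $m-1$ blue. But the red and blue target forests are separate events, so $v$ may lie in a red star and a blue star at once, and the blue edges at $v$ can complete a new blue $K_{1,m}$ centered at a neighbor of $v$. All edges at $v$ must get the single color whose multiplicity you decrease, as in Lemma \ref{lemma2.2}.

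Second, the attachment step and the exceptional families are only announced. For $\Delta=N$ the paper's argument is concrete. Since $\Delta=N$, a vertex $u\in N(v)$ lying in a star $A$ of $G-v$ is a leaf of $A$. Give both edges at $u$ a color $i$ with $a_i\ge 2$. The rest of $G[N[v]\cup A]$ is bipartite with maximum degree $N-1$; this needs $N\ge 3$, since other leaves of $A$ in $N(v)$ have degree $2$ there. Split a K\"onig coloring into $n-1$ red and $m-1$ blue matchings, so these $2N$ edges yield only one monochromatic star, and apply Fact \ref{fact2.4} to the untouched stars.

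This generic argument needs $N\ge 3$ and $G-v$ to be a star forest. When $N=2$ or $n=m=2$ these can fail, since $G-v$ may have $C_4$ or $K_3$ components. The paper (Theorems \ref{theo3.4} and \ref{theo3.5}) therefore works through how $v$ can meet one, two or three components of $G-v$, producing $P_5$, $P_7$ and the various gluings onto $K_3$ and $K_{1,3}$, and gives an avoiding coloring in each case. That case analysis is the actual proof of the $C_4$ and $K_3$ families, and your proposal leaves it undone.
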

  
  In fact, earlier than 2025, Zhang extended Theorem \ref{theo1.2} to multicolors.
  
  \begin{theo}[Zhang \cite{Zhang}]\label{theo1.4}
    For a given positive integer $t$, let $a_{1}, a_{2}, \dots, a_{t}$ and $b_{1}, b_{2}, \cdots, b_{t}$ be positive integers. Then
    $$\hat{r}\left(a_{1}K_{1, b_{1}}, a_{2}K_{1, b_{2}}, \dots, a_{t} K_{1, b_{t}}\right)= \left(\sum_{s= 1}^{t} a_{s}- t+ 1\right)\left(\sum_{s= 1}^{t} b_{s}- t+ 1\right).$$
  \end{theo}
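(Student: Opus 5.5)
Write $A=\sum_{s=1}^{t} a_s-t+1$ and $B=\sum_{s=1}^{t} b_s-t+1$. To prove Theorem \ref{theo1.4}, I would show the upper bound $\hat r\le AB$ by a construction and the lower bound $\hat r\ge AB$ by exhibiting, for every graph $G$ with fewer than $AB$ edges, a $t$-colouring with no colour-$i$ copy of $a_iK_{1,b_i}$ for any $i$.

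\emph{Upper bound.} Take $G=AK_{1,B}$. Fix a $t$-colouring. In one star $K_{1,B}$ the centre has $B=\sum_s(b_s-1)+1$ edges, so by pigeonhole some colour $i$ appears on at least $b_i$ of them. That star then contains a colour-$i$ copy of $K_{1,b_i}$. Assign each of the $A$ stars such a colour. Since $A=\sum_s(a_s-1)+1$, pigeonhole gives a colour $i$ assigned to at least $a_i$ stars. Those stars are vertex-disjoint, so they yield a colour-$i$ copy of $a_iK_{1,b_i}$.

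\emph{Lower bound.} I would argue by induction on $\sum_s a_s$, for $G$ with $e(G)<AB$. The base case is $a_s=1$ for all $s$, so $A=1$ and $e(G)<B$. Colour edges so that each vertex has fewer than $b_i$ incident edges in colour $i$. This is possible because $\Delta(G)\le e(G)\le B-1=\sum_s(b_s-1)$. Split the edge set arbitrarily into classes of sizes at most $b_i-1$; then every vertex has colour-$i$ degree at most $b_i-1$, and no monochromatic star of the required size appears.

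\emph{Inductive step.} Suppose $a_i\ge2$ for some $i$.
\begin{itemize}
\item If $\Delta(G)\le B-1$, colour so that every vertex has colour-$s$ degree below $b_s$ for every $s$. This is a degree-constrained edge decomposition: split the edges at each vertex into $t$ classes with the prescribed local capacities $b_s-1$. I would obtain it via an orientation or Euler-splitting argument, or simply by a greedy/alternative construction. I expect this to be the main obstacle, since the capacities must hold at both endpoints of every edge.
\item If instead there is a vertex $v$ with $d(v)\ge B$, colour all edges at $v$ with colour $i$. Then apply induction to $G-v$ with $a_i$ replaced by $a_i-1$. This is valid because $e(G-v)\le e(G)-B<(A-1)B$. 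A colour-$i$ copy of $a_iK_{1,b_i}$ in $G$ uses at most one star meeting $v$. Deleting it leaves $(a_i-1)K_{1,b_i}$ in $G-v$ in colour $i$, a contradiction. The other colours are unaffected.
\end{itemize}

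\emph{Easier route for the low-degree case.} To avoid the decomposition problem, I would strengthen the inductive hypothesis. The aim is that whenever $e(G)<AB$ the colouring has colour-$s$ matching-of-stars number below $a_s$. This can be achieved by choosing, at each step, one vertex with maximal degree and stripping it, until $\Delta<B$. In the residual graph I would colour greedily along a degeneracy-type ordering and use that $a_s$ stars need $a_s$ distinct high-degree centres. Counting edges then shows that the centres cannot all exist when $e<AB$. This counting step is where I expect the real work to lie.
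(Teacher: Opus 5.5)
Your upper bound, your base case, and your high-degree step are correct. The high-degree step deletes a vertex of degree at least $B$, gives its edges colour $i$ with $a_i\ge 2$, and inducts; this is the contrapositive of Lemma~\ref{lemma2.2}. The paper itself only quotes this theorem from Zhang, but these are its tools.

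The gap is the first bullet of your inductive step. When $\Delta(G)=B-1$, the colouring you ask for (colour-$s$ degree at most $b_s-1$ at every vertex) need not exist. Take $t=2$, $b_1=b_2=2$, $a_1=2$, $a_2=1$, so $B=3$ and $A=2$, and let $G=K_3$ (or $C_5$). Then $e(G)<AB$ and $\Delta(G)=B-1$, but your requirement is a proper $2$-edge-colouring of an odd cycle, which is impossible. Vizing's theorem gives such a ``local'' colouring only when $\Delta(G)\le B-2$, which is Fact~\ref{fact2.1}.

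Your ``easier route'' does not repair this. Stripping vertices until $\Delta<B$ leaves you in exactly the same situation. The greedy degeneracy colouring and the centre-counting are not specified enough to be an argument.

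The missing idea is that here you must spend the slack $\sum_s(a_s-1)=A-1$ on individual edges rather than on vertices:
\begin{itemize}
\item By Vizing, $\chi'(G)\le \Delta(G)+1\le B$, so $E(G)$ splits into $B$ matchings, some possibly empty.
\item The smallest of these matchings, $M$, satisfies $|M|\le e(G)/B<A$, i.e.\ $|M|\le A-1$.
\item Give $b_s-1$ of the remaining $B-1=\sum_s(b_s-1)$ matchings colour $s$. Then $G-M$ has no colour-$s$ $K_{1,b_s}$ for any $s$.
\item Colour $p_s\le a_s-1$ edges of $M$ with colour $s$. This is possible since $|M|\le\sum_s(a_s-1)$.
\end{itemize}
An edge lies in at most one star of a vertex-disjoint family. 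So each added colour-$s$ edge raises the maximum number of vertex-disjoint colour-$s$ copies of $K_{1,b_s}$ by at most one; this is Fact~\ref{fact2.3}. Hence no colour-$s$ copy of $a_sK_{1,b_s}$ appears.

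With this argument in place of your first bullet, the induction closes. It also covers the base case.
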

  
  In this paper, we completely characterize the size Ramsey minimal graphs for uniform star forests in multicolors.
  
  \begin{theo}\label{theo1.5}
    For a given positive integer $t$, let $a_{1}, a_{2}, \dots, a_{t}$ be positive integers and $b_{1}\geq b_{2}\geq \cdots\geq b_{t}\geq 1$ be integers. If $G$ is a size Ramsey minimal graph for $\left(a_{1}K_{1, b_{1}}, a_{2}K_{1, b_{2}}, \dots, a_{t}K_{1, b_{t}}\right)$, then $G= (a- t+ 1)K_{1, b}$, where $a= \sum_{s= 1}^{t} a_{s}$ and $b= \sum_{s= 1}^{t} b_{s}- t+ 1$. Moreover, the following holds.
    
    {\rm (1)} If $a_{1}= 1, b_{1}= 2$ and $b_{2}= 1$, then also $G= cC_{4}\bigsqcup (a- t+ 1- 2c)K_{1, 2}$, where $c\in [\lfloor (a- t+ 1)/2\rfloor]$.

    {\rm (2)} If $b_{1}= b_{2}= 2$ and $b_{3}= 1$, then also $G= cK_{3}\bigsqcup (a- t+ 1- c)K_{1, 3}$, where $c\in [a- t+ 1]$. Moreover, if $a_{1}= a_{2}= 1$, then also $G= cK_{3}\bigsqcup c'K_{4}\bigsqcup (a- t+ 1- c- 2c')K_{1, 3}$, where $c$ and $c'$ are nonnegative integers such that $1\leq c+ 2c'\leq a- t+ 1$.
  \end{theo}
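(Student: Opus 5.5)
We argue by induction on $a$, using Theorem \ref{theo1.4} for the value $\hat r=(a-t+1)b$. The base case is $a_1=\cdots=a_t=1$. Here we need graphs $G$ with $e(G)=b$ and $G\rightarrow(K_{1,b_1},\dots,K_{1,b_t})$. If every vertex has degree at most $b-1$, we aim for a $t$-colouring in which every vertex has degree less than $b_i$ in colour $i$. Since $\sum_i(b_i-1)=b-1$, we would try to obtain it by applying Petersen/König-type decompositions, or equitable splitting along an Euler tour, to each component. The key computation is that a graph with exactly $b$ edges and maximum degree at most $b-1$ must have a very restricted structure to resist such a colouring. The colouring fails only when some vertex is forced to have colour-$i$ degree at least $b_i$ for parity reasons. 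This happens only for odd cycles when two of the $b_i$ equal $2$ and the rest equal $1$ (giving $K_3$), or when $b_1=2$ and all other $b_i=1$ (giving $C_4$ and related configurations). So for $a=t$ the minimal graphs are $K_{1,b}$, together with $K_3$ when $b=3$ arises as $2+2-1$, and $C_4$ when the number of edges $b=2$ is too small. Here the multiplicity in the statement counts copies with $a-t+1=1$; the cases with $C_4$ and $K_4$ have $e=4$ or $6$ and appear only when $a-t+1\ge2$. The case $(b_1,b_2)=(2,2)$ should also be checked directly.

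For the inductive step, take a minimal $G$ with $e(G)=(a-t+1)b$, and let $v$ be a vertex of maximum degree. We first show $\Delta(G)\ge b$, or else identify the exceptional cases. If $\Delta(G)<b$, we want a colouring avoiding everything. We first try to colour so that no colour class contains even a single $K_{1,b_i}$ in more than $a_i-1$ disjoint places. A counting argument of the type used by Burr et al. gives this: the colour-$i$ graph has a vertex cover by vertices of colour-$i$ degree at least $b_i$, and we need fewer than $a_i$ disjoint such stars. The exceptional structures $K_3$, $C_4$, $K_4$ are exactly the small dense components where this fails. Otherwise $\Delta(G)\ge b$. We then delete a star $K_{1,b}$ centred at $v$, chosen on $b$ edges at $v$, and observe that $G-v$ must arrow the tuple with some $a_i$ decreased by one. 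Indeed, any good colouring of $G-v$ extends by colouring $v$'s edges with $b_i-1$ edges of each colour $i$, together with one extra edge of colour $i$, and this creates at most one new star in colour $i$. Choosing $i$ appropriately gives the arrow. Thus $e(G-v)\ge (a-t)b$, so $\deg v\le b$, and we get equality throughout. By induction, $G-v$ is one of the listed graphs for parameters with $a$ reduced by one.

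It remains to show that $v$'s star is a separate component, i.e. that no edge at $v$ meets $G-v$. If some neighbour $u$ of $v$ lies in a component of $G-v$, we exhibit a colouring of $G$ that avoids the tuple. We take a good colouring of $G-v$, with $a_i$ reduced, that is extremal in that $u$'s colour degrees are saturated, and then colour $v$'s edges so that no new monochromatic star appears; this is possible because stars through $u$ and $v$ overlap. This step, together with tracking exactly when $K_3$, $C_4$, $K_4$ components survive, should be the main obstacle. The condition that $K_4$ occurs only when $a_1=a_2=1$ in case (2) comes from the fact that a $K_4$ decomposes into two triangles. For a $K_4$ component, a colour-1/colour-2 colouring of $K_4$ avoiding $P_3$ in both colours, i.e. a perfect matching pair plus colour 3, fails. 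So $K_4$ forces a $K_{1,2}$ in colour 1 or 2, and it is absorbed only when that colour needs a single star. Finally, we verify that each listed graph does arrow the tuple, by counting edges against the maximum number that avoid $a_iK_{1,b_i}$ in colour $i$.
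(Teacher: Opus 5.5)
You have a genuine gap in the case $\Delta(G)=b-1$. Your treatment of $\Delta(G)=b$ follows the paper's outline: delete a vertex $v$ of degree $b$, note that $G-v$ arrows the tuple with some $a_i\ge 2$ lowered, so $e(G-v)=(a-t)b$ and induction identifies $G-v$. The case $\Delta(G)=b-1$, however, you dismiss with a ``counting argument of the type used by Burr et al.'' that is never given; the remark about a vertex cover by high-degree vertices does not produce a colouring.

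Your induction cannot reach this case. If $d(v)=b-1$ then $e(G-v)=(a-t)b+1$, so $G-v$ is not size Ramsey minimal and the inductive hypothesis says nothing about it. This is also exactly where the $K_3$ exception lives: $(a-t+1)K_3$ has $\Delta=b-1$, whereas $C_4$ and $K_4$ have $\Delta=b$. So your list of obstructions for $\Delta<b$ is inaccurate as well.

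The missing idea (Case 2 of Theorem~\ref{theo3.6}) is to iterate the deletion. By Lemma~\ref{lemma2.2}, whose proof does not use minimality, together with Fact~\ref{fact2.1}, each successively reduced graph still has a vertex of degree $b-1$. That vertex cannot be adjacent to earlier choices, because $\Delta(G)=b-1$. This gives an independent set $U$ of $a-t+1$ vertices of degree $b-1$. Then $G[U,W]$ is bipartite of maximum degree $b-1$, so by K\"onig it splits into $b-1$ matchings, assigned $b_s-1$ per colour $s$, and $G[W]$ has exactly $a-t+1$ edges.

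If two edges of $G[W]$ are incident, colour both with a colour $i$ having $a_i\ge 2$. This creates only colour-$i$ stars through their common vertex, so Fact~\ref{fact2.3} gives a colouring of $G$ avoiding the tuple. Hence $G[W]$ is a matching. A local recolouring of the K\"onig matchings then lets one edge of $G[W]$ be coloured without creating any star, again contradicting $G\rightarrow(a_1K_{1,b_1},\dots,a_tK_{1,b_t})$ via Fact~\ref{fact2.3}. That recolouring is where $b_1\ge 3$, or at least three $b_s=2$, is needed. It is exactly what fails for $b_1=b_2=2$, $b_3=1$.

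Your separation step is also not an argument as written. You start from ``a good colouring of $G-v$ with $a_i$ reduced'', but $G-v$ arrows that reduced tuple, so no such colouring exists. What is needed is to use the explicit structure of $G-v$ supplied by induction. In the generic case $G-v=(a-t)K_{1,b}$ with $b\ge 3$, the paper does the following:
\begin{itemize}
\item take a star $A$ of $G-v$ meeting $N(v)$ at a leaf $u$, and colour the two edges at $u$ with a colour $i$ having $a_i\ge 2$;
\item split the remaining edges of $G[N[v]\cup A]$ into $b-1$ matchings, distributed $b_s-1$ per colour, so that this piece avoids $(K_{1,b_1},\dots,2K_{1,b_i},\dots,K_{1,b_t})$;
\item colour every other star of $G-v$ monochromatically via Fact~\ref{fact2.4}.
\end{itemize}
For $b\le 3$ this becomes a case analysis over how $N(v)$ meets the components of $G-v$. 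Your target statement that $v$'s star is a separate component is false when $C_4$ or $K_4$ components occur, since $v$ may lie on one.

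Your justification of the deletion step is also wrong as stated. Colouring $v$'s edges with several colours can create new stars in several colours centred at neighbours of $v$. You must colour all edges at $v$ with the single colour $i$, as in Lemma~\ref{lemma2.2}.

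Your base case is only sketched and is muddled about $C_4$. In fact $e(G)=b$ and $\Delta(G)\ge b-1$ already force $G$ to be $K_{1,b}$ or $K_{1,b-1}$ plus one edge, which is a short check.
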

  
  \begin{remark}\label{remark1.6}
    Let $a_{3}= a_{4}= \cdots= a_{t}= 1$ and $b_{3}= b_{4}= \cdots= b_{t}= 1$, and then Theorem \ref{theo1.5} is Theorem \ref{theo1.3} since a graph is the size Ramsey minimal graph for $(a_{1}K_{1, b_{1}}, a_{2}K_{1, b_{2}})$ if and only if it is the size Ramsey minimal graph for $(a_{1}K_{1, b_{1}}, a_{2}K_{1, b_{2}}, K_{1, 1}, K_{1, 1}, \dots, K_{1, 1})$.
  \end{remark}

\section{Preliminaries}
  For a graph $G$ and a vertex $v\in V(G)$, let $N_{G}(v)$ and $d_{G}(v)$ be the neighbours and the degree of $v$ in $G$, respectively. If the graph $G$ is unique, then simplify them as $N(v)$ and $d(v)$. Moreover, let $N[v]= N(v)\cup \{v\}$ and let $\Delta(G)$ be the maximum degree of $G$. For vertex sets $U, W\subset V(G)$ such that $U\cap W= \emptyset$, let $G[U]$ and $G[U, W]$ be the graph induced by $G$ on $U$ and induced by edges of $G$ between $U$ and $W$, respectively. Let $G- U$ be the graph obtained from $G$ by removing the vertex set $U$ and edges adjacent to vertex in $U$. For graphs $G$ and $H$, let $G\cup H$ be the union of $G$ and $H$. Let $K_{n}$, $P_{n}$ and $C_{n}$ be the complete graph, the path and the cycle of order $n$, respectively. A {\it proper edge coloring} of $G$ is a coloring of $E(G)$ such that incident edges receive distinct colors. The {\it edge chromatic number} of $G$, $\chi'(G)$ is the minimum number of colors over all proper edge coloring of $G$. A {\it component} of $G$ is a maximal connected subgraph of $G$. The {\it center} of $K_{1, n}$ is the vertex with degree $n$. For a given positive integer $t$, let $a_{1}, a_{2}, \dots, a_{t}, b_{1}, b_{2}, \dots, b_{t}$ be positive integers.
  
  \begin{fact}\label{fact2.1}
    For given positive integers $n_{1}, n_{2}, \dots, n_{t}$, let $G$ be a graph. If $\Delta(G)\leq \sum_{s= 1}^{t} n_{s}- t- 1$, then $G\not\rightarrow (K_{1, n_{1}}, K_{1, n_{2}}, \dots, K_{1, n_{t}})$.
  \end{fact}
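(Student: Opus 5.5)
The plan is to exhibit an explicit $t$-coloring of $E(G)$ with no monochromatic $K_{1,n_s}$ in color $s$ for any $s\in[t]$. Such a coloring exists as soon as $E(G)$ can be partitioned into subgraphs $H_1,\dots,H_t$ with $\Delta(H_s)\leq n_s-1$ for every $s$. If we colour the edges of $H_s$ with colour $s$, then every vertex has at most $n_s-1$ edges of colour $s$, so no $K_{1,n_s}$ in colour $s$ can appear. It therefore suffices to build such a partition under the assumption $\Delta(G)\leq \sum_{s=1}^{t} n_s-t-1$.

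To build the partition, invoke Vizing's theorem: $\chi'(G)\leq \Delta(G)+1\leq \sum_{s=1}^{t}(n_s-1)$. Take a proper edge coloring of $G$ with at most $\sum_{s=1}^{t}(n_s-1)$ colours, whose colour classes are matchings $M_1,M_2,\dots$. Group these matchings into $t$ consecutive blocks, with block $s$ containing $n_s-1$ matchings. If there are fewer matchings than $\sum_s (n_s-1)$, some blocks are simply smaller. Let $H_s$ be the union of the matchings in block $s$. Each vertex meets at most one edge of each matching, so $d_{H_s}(v)\leq n_s-1$.

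The one point needing care is the degenerate case $n_s=1$. Then block $s$ is empty, which is consistent: colour $s$ is never used, and indeed a single edge of colour $s$ would already form $K_{1,1}$. The count still works, since such $s$ contribute $0$ to $\sum(n_s-1)$. I expect no real obstacle. The only substantive input is Vizing's bound $\chi'(G)\leq\Delta(G)+1$, which is exactly what makes the hypothesis $\Delta(G)\leq\sum_{s=1}^{t} n_s-t-1$, rather than $\sum_{s=1}^{t} n_s-t$, the natural threshold.
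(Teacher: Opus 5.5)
Your proposal is correct and follows essentially the same route as the paper: Vizing's theorem gives a proper edge colouring with at most $\sum_{s=1}^{t} n_s - t$ matchings, and assigning $n_s-1$ of them to colour $s$ leaves every vertex with at most $n_s-1$ edges of colour $s$. Your remarks on empty blocks when $n_s=1$ and on having fewer than $\sum_{s}(n_s-1)$ matchings only make explicit details the paper leaves implicit.
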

  \begin{proof}
    Note that $\chi'(G)\leq \Delta(G)+ 1\leq \sum_{s= 1}^{t} n_{s}- t$ by the Vizing Theorem \cite{VZ}. Thus, there is a proper edge coloring of $G$ with $\sum_{s= 1}^{t} n_{s}- t$ colors. In other words, $E(G)$ can be decomposed into $\sum_{s= 1}^{t} n_{s}- t$ edge-disjoint matchings. Color $n_{s}- 1$ matchings by color $s$ for each $s\in [t]$. Consequently, $G\not\rightarrow \left(K_{1, n_{1}}, K_{1, n_{2}}, \dots, K_{1, n_{t}}\right)$.\q
  \end{proof}

  We also need the following on induction.
    
  \begin{lemma}\label{lemma2.2}
    Let $G$ be a size Ramsey minimal graph for $(a_{1}K_{1, b_{1}}, a_{2}K_{1, b_{2}}, \dots, a_{t}K_{1, b_{t}})$ and let $v\in V(G)$ be a vertex. For each $s\in [t]$, if $a_{s}\geq 2$, then 
    $$G- \{v\}\rightarrow \left(a_{1}K_{1, b_{1}}, \dots, a_{s- 1}K_{1, b_{s- 1}}, (a_{s}- 1)K_{1, b_{s}}, a_{s+ 1}K_{1, b_{s+ 1}}, \dots, a_{t} K_{1, b_{t}}\right).$$
  \end{lemma}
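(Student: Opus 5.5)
The plan is to argue by contradiction, taking an explicit bad colouring of $G-\{v\}$ and extending it to a bad colouring of $G$. Let $G'=G-\{v\}$ and suppose there is a $t$-colouring $\phi'$ of $E(G')$ with no copy of $a_iK_{1,b_i}$ in colour $i$ for $i\neq s$, and no copy of $(a_s-1)K_{1,b_s}$ in colour $s$. We extend $\phi'$ to a colouring $\phi$ of $E(G)$ by giving every edge incident with $v$ colour $s$.

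First check the colours $i\neq s$. The colour-$i$ subgraph of $G$ under $\phi$ is exactly the colour-$i$ subgraph of $G'$ under $\phi'$. It therefore contains no $a_iK_{1,b_i}$.

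Now check colour $s$, which is the only point needing care. Suppose the colour-$s$ subgraph of $G$ contains $a_sK_{1,b_s}$. At most one of its components uses the vertex $v$, whether as a center or as a leaf. If $v$ lies in one of these stars, delete that whole star. If $v$ is unused, delete any one star; this is possible since $a_s\ge 2$ guarantees at least one star exists. What remains is $(a_s-1)$ vertex-disjoint stars $K_{1,b_s}$ avoiding $v$. All their edges are edges of $G'$ with $\phi'$-colour $s$, which contradicts the choice of $\phi'$.

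Hence $\phi$ is a $t$-colouring of $G$ with no monochromatic $a_iK_{1,b_i}$ in colour $i$ for any $i$. This contradicts $G\rightarrow(a_1K_{1,b_1},\dots,a_tK_{1,b_t})$, which holds because $G$ is size Ramsey minimal.

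Minimality of size is not actually needed; only the arrowing property of $G$ is used. The hypothesis $a_s\ge 2$ is what makes the target $(a_s-1)K_{1,b_s}$ a nonempty graph, so the statement is meaningful. The only step requiring attention is the observation that $v$ meets at most one component of a star forest. This holds because the components are vertex-disjoint, so I expect no real obstacle.
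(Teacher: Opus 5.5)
Your proof is correct and is essentially the paper's argument: extend a bad colouring of $G-\{v\}$ by giving every edge at $v$ colour $s$. Since the stars of any copy of $a_sK_{1,b_s}$ are vertex-disjoint, at most one of them uses $v$, so the remaining ones would form a copy of $(a_s-1)K_{1,b_s}$ in colour $s$ inside $G-\{v\}$, a contradiction. Your remark that only the arrowing property $G\rightarrow(a_1K_{1,b_1},\dots,a_tK_{1,b_t})$ is used, not size-minimality, is also accurate; the paper's proof likewise uses nothing more.
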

  \begin{proof}
    Otherwise, there is a $t$-coloring of $E(G- \{v\})$ such that there is no monochromatic copy of $a_{j}K_{1, b_{j}}$ in color $j$ for each $j\in [t]\backslash \{s\}$ and no monochromatic copy of $(a_{s}- 1) K_{1, b_{s}}$ in color $s$. Under this coloring of $G- \{v\}$, color all edges adjacent to $v$ by color $s$. To create a monochromatic copy of $a_{s} K_{1, b_{s}}$ in color $s$, we need at least two vertices that are not in $G- \{v\}$. Consequently, $G\not\rightarrow \left(a_{1}K_{1, b_{1}}, a_{2}K_{1, b_{2}}, \dots, a_{t} K_{1, b_{t}}\right)$. This is a contradiction.\q
  \end{proof}
  
  For convenience to describe the coloring in Section \ref{sec3}, we need the following facts.
  
  \begin{fact}\label{fact2.3}
    Let $c_{1}, c_{2}, \dots, c_{t}$ and $c$ be positive integers. Let $G$ be a graph, and let $E\subset E(G)$ with $|E|= c$. If $G- E\not\rightarrow (c_{1}K_{1, b_{1}}, c_{2}K_{1, b_{2}}, \dots, c_{t}K_{1, b_{t}})$, then 
    $$G\not\rightarrow ((c_{1}+ p_{1})K_{1, b_{1}}, (c_{2}+ p_{2})K_{1, b_{2}}, \dots, (c_{t}+ p_{t})K_{1, b_{t}}),$$
    where $p_{s}$ is a nonnegative integer for each $s\in [t]$ and $\sum_{s= 1}^{t} p_{s}= c$.
  \end{fact}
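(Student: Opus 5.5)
We prove Fact \ref{fact2.3} by extending a good coloring of $G-E$ to $E$ and checking that the forests grow by at most the allowed amount. Since $G- E\not\rightarrow (c_{1}K_{1, b_{1}}, \dots, c_{t}K_{1, b_{t}})$, fix a $t$-coloring $\varphi$ of $E(G- E)$ with no monochromatic copy of $c_{s}K_{1, b_{s}}$ in color $s$, for every $s\in [t]$. Here $G-E$ means $G$ with the edges of $E$ deleted and all vertices kept. Write $E= \{e_{1}, \dots, e_{c}\}$. Because $\sum_{s} p_{s}= c$, we can partition $E$ into sets $E_{1}, \dots, E_{t}$ with $|E_{s}|= p_{s}$. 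Extend $\varphi$ to all of $E(G)$ by giving every edge of $E_{s}$ color $s$.

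We claim this coloring contains no monochromatic $(c_{s}+ p_{s})K_{1, b_{s}}$ in color $s$ for any $s$. Suppose instead that $F$ is such a copy, with components $S_{1}, \dots, S_{c_{s}+ p_{s}}$, each isomorphic to $K_{1, b_{s}}$. The edges of color $s$ in $G$ are the color-$s$ edges of $G- E$ together with $E_{s}$. The stars are edge-disjoint (indeed vertex-disjoint), so each of the $p_{s}$ edges of $E_{s}$ lies in at most one $S_{i}$. Hence at most $p_{s}$ of the stars meet $E_{s}$. The remaining at least $c_{s}$ stars use only color-$s$ edges of $G- E$. They are vertex-disjoint, so together they form a copy of $c_{s}K_{1, b_{s}}$ in color $s$ in $G- E$ under $\varphi$, which contradicts the choice of $\varphi$.

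Therefore $G\not\rightarrow ((c_{1}+ p_{1})K_{1, b_{1}}, \dots, (c_{t}+ p_{t})K_{1, b_{t}})$. The only point needing care is the counting: one added edge can destroy at most one star of the forest. This holds because distinct components of a star forest share no edges.
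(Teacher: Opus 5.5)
Your proof is correct and takes the same approach as the paper: extend a good coloring of $G-E$ by giving $p_{s}$ edges of $E$ color $s$, and use the fact that each added edge can lie in at most one star of a monochromatic star forest. You spell out the step the paper compresses into one sentence, namely that the at least $c_{s}$ stars avoiding $E_{s}$ lie entirely in color $s$ within $G-E$.
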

  \begin{proof}
    Note that an edge in color $s$ creates at most a monochromatic copy of $K_{1, b_{s}}$ in color $s$. Color $p_{s}$ edges of $E$ by color $s$ for each $s\in [t]$. Thus, 
    $$G\not\rightarrow ((c_{1}+ p_{1})K_{1, b_{1}}, (c_{2}+ p_{2})K_{1, b_{2}}, \dots, (c_{t}+ p_{t})K_{1, b_{t}})$$ 
    since $G- E\not\rightarrow (c_{1}K_{1, b_{1}}, c_{2}K_{1, b_{2}}, \dots, c_{t}K_{1, b_{t}})$.\q
  \end{proof}
  
  \begin{fact}\label{fact2.4}
    Let $c, c_{1}, c_{2}, \dots, c_{t}$ and $b$ be positive integers. Let $G$ be a graph without isolated vertex. Let $H= cK_{1, b}$ and let $V$ be the centers of $H$. If $V\cap V(G)= \emptyset$ and $G\not\rightarrow (c_{1}K_{1, b_{1}}, c_{2}K_{1, b_{2}}, \dots, c_{t}K_{1, b_{t}})$, then 
    $$G\cup H\not\rightarrow ((c_{1}+ p_{1})K_{1, b_{1}}, (c_{2}+ p_{2})K_{1, b_{2}}, \dots, (c_{t}+ p_{t})K_{1, b_{t}}),$$
    where $p_{s}$ is a nonnegative integer for each $s\in [t]$ and $\sum_{s= 1}^{t} p_{s}= c$.
  \end{fact}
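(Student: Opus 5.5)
The plan is to take a good coloring of $G$ and extend it to the new stars of $H$ so that each new star contributes at most one extra monochromatic star. Fix a $t$-coloring $\varphi$ of $E(G)$ with no copy of $c_{s}K_{1, b_{s}}$ in color $s$ for every $s\in [t]$; it exists by assumption. Let $H$ consist of stars $S_{1}, \dots, S_{c}$ with centers $v_{1}, \dots, v_{c}$. List the indices so that $p_{1}$ of the stars are assigned color $1$, $p_{2}$ are assigned color $2$, and so on; this is possible since $\sum_{s} p_{s}= c$. Color every edge of a star assigned color $s$ with color $s$. Edges of $H$ may coincide with edges of $G$ only if both endpoints lie in $V(G)$. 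Since the centers are outside $V(G)$, every edge of $H$ has an endpoint $v_{i}\notin V(G)$, so no edge of $H$ is an edge of $G$ and the coloring of $G\cup H$ is well defined. If two stars of $H$ share an edge $v_{i}v_{j}$ between centers, the edge lies in both $S_{i}$ and $S_{j}$. I will read $H= cK_{1, b}$ as a vertex-disjoint union, so this does not occur. Even if it did, giving the edge the color of either star does not hurt the counting below.

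Now suppose, for contradiction, that some color $s$ contains a copy $F$ of $(c_{s}+ p_{s})K_{1, b_{s}}$. The key claim is that at most $p_{s}$ components of $F$ meet the vertex set $V= \{v_{1}, \dots, v_{c}\}$. Indeed, $v_{i}$ has color-$s$ edges only if $S_{i}$ was assigned color $s$, because all edges at $v_{i}$ belong to $S_{i}$. Here the disjointness of the stars of $H$ is used, and $v_{i}\notin V(G)$ means $v_{i}$ has no $G$-edges. A vertex $v_{i}$ with no color-$s$ edge cannot lie in a component of $F$, since $b_{s}\geq 1$ and every vertex of $F$ is incident to an edge of $F$. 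So only the $p_{s}$ centers of stars assigned color $s$ can lie in $F$, and since the components of $F$ are vertex-disjoint, at most $p_{s}$ components meet $V$.

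The remaining at least $c_{s}$ components of $F$ avoid $V$. Every edge of $H$ is incident to some $v_{i}$, so these components use only edges of $G$ in color $s$. They therefore form a copy of $c_{s}K_{1, b_{s}}$ in color $s$ inside $G$ under $\varphi$, a contradiction. Hence $G\cup H\not\rightarrow ((c_{1}+ p_{1})K_{1, b_{1}}, \dots, (c_{t}+ p_{t})K_{1, b_{t}})$.

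The main obstacle is making sure an edge of $H$ cannot overlap $G$ or another star in a way that breaks the counting. The hypothesis $V\cap V(G)= \emptyset$ rules out overlap with $G$. Overlap between stars is excluded by reading $cK_{1, b}$ as vertex-disjoint, which is the paper's convention. If the paper allows leaves of one star to equal centers of another, an edge $v_{i}v_{j}$ must be handled separately. In that case I would color such an edge with the color of $S_{i}$, and note that a component containing $v_{j}$ in color $s$ still requires a color-$s$ star at $v_{j}$ or at $v_{i}$. Alternatively, the edge can be charged to one of the $p_{s}$ allowed components, which keeps the count at $p_{s}$.
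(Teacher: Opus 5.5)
Your proof is correct and follows the paper's argument: extend a good coloring of $G$ by giving $p_s$ whole stars of $H$ color $s$. Then any color-$s$ star not contained in $G$ must contain one of the $p_s$ centers carrying color $s$, so at most $p_s$ components of a color-$s$ copy of $(c_s+p_s)K_{1,b_s}$ leave $G$. Your write-up is more careful than the paper's, since you check that $H$-edges cannot coincide with $G$-edges and that components avoiding the centers use only $G$-edges. Your final paragraph on overlapping stars is unnecessary, because the paper defines $cK_{1,b}$ as a vertex-disjoint union.
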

  \begin{proof}
    Note that a monochromatic copy of $K_{1, n_{s}}$ in color $s$, which is not in $G$, contains at least one edge in color $s$ of $H$, and thus contains a center with edges in color $s$ since $H= cK_{1, b}$. Color $p_{s}$ copies of $K_{1, b}$ of $H$ by color $s$ for each $s\in [t]$. Thus, 
    $$G\cup H\not\rightarrow ((c_{1}+ p_{1})K_{1, b_{1}}, (c_{2}+ p_{2})K_{1, b_{2}}, \dots, (c_{t}+ p_{t})K_{1, b_{t}})$$
    since $G\not\rightarrow (c_{1}K_{1, b_{1}}, c_{2}K_{1, b_{2}}, \dots, c_{t}K_{1, b_{t}})$.\q
  \end{proof}
  
\section{Size Ramsey minimal graphs for uniform star forests}\label{sec3}

  In this section, we characterize the size Ramsey minimal graphs for uniform star forests. Let $G$ be a size Ramsey minimal graph for $(a_{1}K_{1, b_{1}}, a_{2}K_{1, b_{2}}, \dots, a_{t}K_{1, b_{t}})$, and then $e(G)= (a- t+ 1)b$ by Theorem \ref{theo1.4}, where $a= \sum_{s= 1}^{t} a_{s}$ and $b= \sum_{s= 1}^{t} b_{s}- t+ 1$. We firstly show that $\Delta(G)$ is bounded.
  
  \begin{fact}\label{fact3.1}
    $b- 1\leq \Delta(G)\leq b$.
  \end{fact}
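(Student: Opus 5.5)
The plan is to prove the two inequalities separately. The lower bound $\Delta(G)\geq b-1$ is immediate from Fact \ref{fact2.1}; the upper bound $\Delta(G)\leq b$ follows from minimality of $G$ by deleting an edge at a vertex of large degree.

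\emph{Lower bound.} Since $G\rightarrow (a_{1}K_{1, b_{1}}, \dots, a_{t}K_{1, b_{t}})$ and $a_{s}K_{1,b_{s}}\supseteq K_{1,b_{s}}$, we get $G\rightarrow (K_{1, b_{1}}, \dots, K_{1, b_{t}})$. By Fact \ref{fact2.1} with $n_{s}=b_{s}$, this forces $\Delta(G)\geq \sum_{s=1}^{t} b_{s}-t = b-1$.

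\emph{Upper bound.} Suppose some vertex $v$ has $d(v)\geq b+1$. Pick an edge $e=vw$ and set $G'=G-e$. Since $e(G')<e(G)=\hat r(\dots)$ by Theorem \ref{theo1.4}, there is a $t$-coloring of $G'$ with no monochromatic $a_{s}K_{1,b_{s}}$ in color $s$ for every $s$. In this coloring, $v$ still has degree at least $b=\sum_s (b_s-1)+1$. By pigeonhole, some color $s$ has at least $b_{s}$ edges at $v$, so $v$ is a center of a $K_{1,b_s}$ in color $s$. We now give $e$ color $s$ and argue that no new copy of $a_sK_{1,b_s}$ in color $s$ appears.

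Suppose a copy $F$ of $a_{s}K_{1,b_{s}}$ in color $s$ uses $e$.
\begin{itemize}
\item If $v$ is the center of the star in $F$ containing $e$, replace $e$ by an unused color-$s$ edge at $v$. Such an edge exists because $v$ had at least $b_s$ color-$s$ edges in $G'$, and it can be chosen to avoid the other stars of $F$, exactly as below.
\item If $w$ is that center, delete this star and instead use a $K_{1,b_s}$ in color $s$ centered at $v$ in $G'$.
\end{itemize}
In either case we obtain a copy of $a_{s}K_{1,b_{s}}$ in color $s$ inside $G'$, a contradiction. Hence $G\not\rightarrow$, contradicting the choice of $G$, and so $\Delta(G)\leq b$.

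\emph{Main obstacle.} The difficulty is disjointness: the new star at $v$ may meet the other $a_s-1$ stars of $F$, through $v$ itself or through its leaves. To handle this, I would choose the color $s$ (and hence the color of $e$) more carefully.

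If $v$ has at least $b_s+1$ color-$s$ edges in $G'$, the swap is easy, since $v$ is not in the other stars, each of which contains at most one neighbour of $v$. The count needed is $b_s+(a_s-1)$ free neighbours, though, which may not be available.

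As a first alternative, I would instead exploit minimality more strongly. Take the coloring of $G'$, add $e$ in color $s$, and recolor: since $G\rightarrow$, every choice of color for $e$ creates $a_sK_{1,b_s}$ in color $s$ through $e$. Summing over all $t$ choices of color then forces $d_{G'}(v)+d_{G'}(w)$ to be small.

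Failing that, the cleanest fallback is a counting argument. By Lemma \ref{lemma2.2} and induction on $a$, the graph $G-\{v\}$ still arrows the reduced tuple, so
\[
e(G-\{v\}) \geq (a-t)b,
\]
using Theorem \ref{theo1.4} with $a_s-1$. Hence $d(v)\leq e(G)-(a-t)b = b$.

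This last argument needs some $a_s\geq 2$. If all $a_s=1$, then $e(G)=b$ and $G$ must contain a vertex of degree $b$ by Fact \ref{fact2.1}, so $G=K_{1,b}$ and $\Delta(G)=b$ trivially. I expect this counting route to be the actual proof, with the recoloring attempt unnecessary.
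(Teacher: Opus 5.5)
Your final argument is exactly the paper's proof: the lower bound comes from Fact \ref{fact2.1}, and the upper bound from Lemma \ref{lemma2.2} together with Theorem \ref{theo1.4} applied to the reduced tuple, with the case $a=t$ handled separately (no induction on $a$ is needed, and you can drop the incomplete edge-deletion recoloring attempt). One correction in that case: Fact \ref{fact2.1} only gives $\Delta(G)\geq b-1$, so you cannot conclude $G=K_{1,b}$ or $\Delta(G)=b$ (indeed $K_{3}$ is minimal for $(K_{1,2},K_{1,2},K_{1,1},\dots,K_{1,1})$ with $\Delta(K_{3})=2=b-1$), but $\Delta(G)\leq e(G)=b$ is all that is required.
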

  \begin{proof}
    If $\Delta(G)\leq b- 2= \sum_{s= 1}^{t} b_{s}- t- 1$, then $G\not\rightarrow (K_{1, b_{1}}, K_{1, b_{2}}, \dots, K_{1, b_{t}})$ by Fact \ref{fact2.1}, and thus $G\not\rightarrow (a_{1}K_{1, b_{1}}, a_{2}K_{1, b_{2}}, \dots, a_{t}K_{1, b_{t}})$ since $K_{1, b_{s}}\subset a_{s}K_{1, b_{s}}$ for each $s\in [t]$. This is a contradiction. Consequently, $\Delta(G)\geq b- 1$.
    
    If $a= t$ (that is, $a_{1}= a_{2}= \cdots= a_{t}= 1$), then $e(G)= b$. Thus, $\Delta(G)\leq b$, and we are done. Suppose that $a\geq t+ 1$, and then $a_{s}\geq 2$ for some $s\in [t]$. Let $v\in V(G)$ be such that $d(v)= \Delta(G)$. By Lemma \ref{lemma2.2}, 
    $$G- \{v\}\rightarrow \left(a_{1}K_{1, b_{1}}, \dots, a_{s- 1}K_{1, b_{s- 1}}, (a_{s}- 1)K_{1, b_{s}}, a_{s+ 1}K_{1, b_{s+ 1}}, \dots, a_{t} K_{1, b_{t}}\right).$$
    Thus, $e(G- \{v\})\geq (a- t)b$ by Theorem \ref{theo1.4}. Furthermore,
    $$(a- t+ 1)b= e(G)= d(v)+ e(G- \{v\})\geq d(v)+ (a- t)b.$$
    Consequently, $d(v)\leq b$, and we are done.\q
  \end{proof}
  
  Now, we are ready to proof Theorem \ref{theo1.5}. By Remark \ref{remark1.6}, we may assume that $t\geq 3$. In the following, we divide Theorem \ref{theo1.5} into Lemma \ref{lemma3.2}, Theorem \ref{theo3.3}, Theorem \ref{theo3.4}, Theorem \ref{theo3.5} and Theorem \ref{theo3.6}.
  
  \begin{lemma}\label{lemma3.2}
    Suppose that $b_{1}\geq b_{2}\geq \cdots \geq b_{t}\geq 1$. If $G$ is a size Ramsey minimal graph for $\left(K_{1, b_{1}}, K_{1, b_{2}}, \dots, K_{1, b_{t}}\right)$, then $G= K_{1, b}$, where $b= \sum_{s= 1}^{t} b_{s}- t+ 1$. Moreover, if $b_{1}= b_{2}= 2$ and $b_{3}= 1$, then also $G= K_{3}$.
  \end{lemma}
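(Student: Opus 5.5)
Here $a_1=\cdots=a_t=1$, so by Theorem \ref{theo1.4} we have $e(G)=b$, and by Fact \ref{fact3.1} we have $b-1\le \Delta(G)\le b$. The plan is to split according to the value of $\Delta(G)$. If $\Delta(G)=b$, then some vertex $v$ is incident with all $b$ edges, so $G=K_{1,b}$, since isolated vertices are ignored. It remains to treat $\Delta(G)=b-1$ and show that this forces $b_1=b_2=2$, $b_3=\cdots=b_t=1$ and $G=K_3$.

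So suppose $\Delta(G)=b-1$. Let $v$ have degree $b-1$, and let $e=xy$ be the unique edge not incident with $v$. Since every vertex has degree at most $b-1$, the vertices of $G$ other than $v$ have small degree: a neighbour of $v$ has degree at most $2$, since it meets $v$ and possibly $e$. I will construct an explicit coloring avoiding $K_{1,b_s}$ in color $s$ for every $s$, unless the parameters are exceptional. Only vertices of degree at least $b_s$ in color $s$ matter, so the main constraint is at $v$.

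First take $b_1\ge 3$. Color $b_s-1$ edges at $v$ with color $s$ for each $s\ge 2$, which uses $\sum_{s\ge2}(b_s-1)$ edges. The remaining $b-1-\sum_{s\ge 2}(b_s-1)=b_1-1$ edges at $v$ get color $1$, and $e$ also gets color $1$. Now $v$ has exactly $b_s-1$ edges of color $s$. Each of $x,y$ has at most two color-$1$ edges (one to $v$ and $e$), which is fewer than $b_1\ge 3$. Any other vertex has degree at most $1$ outside $v$. Hence this coloring is good.

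Next take $b_1\le 2$, with $b_1=2$, since otherwise $b=1$. Say $b_1=\cdots=b_k=2$ and the rest equal $1$, so $b=k+1$ and $d(v)=k$. Colors $k+1,\dots,t$ cannot be used at all. Give the $k$ edges at $v$ the distinct colors $1,\dots,k$, so $v$ is fine. Then $e$ needs a color $s\le k$ that is not used on a $v$-edge at $x$ or at $y$. At most two colors are forbidden, namely those of $vx$ and $vy$ when they exist. So if $k\ge 3$ a free color exists and the coloring is good. If $k=1$ we have $b=2$ and $\Delta=1$, so $G=2K_2$; coloring both edges with color $1$ gives two disjoint $K_2$ but no $K_{1,2}$, so this case cannot occur. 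If $k=2$, a free color exists unless both $x$ and $y$ are adjacent to $v$, that is, unless $G=K_3$. Indeed, if only $vx$ exists, color $e$ differently from $vx$.

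Finally, verify that $K_3\to(K_{1,2},K_{1,2},K_{1,1},\dots)$. With three edges and only colors $1$ and $2$ allowed, two of the edges share a color, and any two edges of a triangle are adjacent, giving a monochromatic $K_{1,2}$. Since $e(K_3)=3=b$, the triangle is a size Ramsey minimal graph. The main obstacle is the case bookkeeping in the $\Delta=b-1$ analysis, especially the small cases $k\le 2$; the rest is routine.
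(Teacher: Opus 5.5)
Your proof is correct and follows essentially the same route as the paper. Both split on $\Delta(G)\in\{b-1,b\}$, then on $b_1\ge 3$ versus $b_1=2$ (and on the number $k$ of $2$'s), and use the same explicit colorings of a $(b-1)$-star plus one extra edge. The differences are cosmetic. You absorb the disconnected case $K_{1,b-1}\sqcup K_2$ into the uniform description ``$v$ plus the edge $xy$'' instead of first proving connectivity by counting components. You also check that $K_3$ actually arrows, which the paper leaves implicit.
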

  \begin{proof} 
    Note that $e(G)= b$ by Theorem \ref{theo1.4}. If $b= 1$, then $b_{1}= b_{2}= \cdots= b_{t}= 1$ and $e(G)= 1$. Thus, $G= K_{1, 1}$, and we are done. Suppose that $b\geq 2$, and thus $b_{1}\geq 2$ by our assumption. Let $C_{1}, C_{2}, \dots, C_{c}$ be components (not isolated vertex) of $G$, and then $c\geq 1$ since $e(G)= b\geq 2$. Moreover, suppose that $e(C_{1})\geq e(C_{2})\geq \cdots\geq e(C_{c})$. By Fact \ref{fact3.1}, $\Delta(G)\geq b- 1$, and thus $e(C_{1})\geq b- 1$. Furthermore, $c\leq 2$ since $e(G)= b$.
    
    If $c= 2$, then $C_{2}= K_{1, 1}$ and $C_{1}= K_{1, b- 1}$ since $\Delta(G)\geq b- 1$. Color the edge of $C_{2}$ by color $1$, and color $b_{s}- 1$ edges of $C_{1}$ by color $s$ for each $s\in [t]$. Thus, $G\not\rightarrow \left(K_{1, b_{1}}, K_{1, b_{2}}, \dots, K_{1, b_{t}}\right)$. This is a contradiction. 
    
    Consequently, $c= 1$, and thus $G$ is connected. We divide the discussion into two parts since $b- 1\leq \Delta(G)\leq b$. If $\Delta(G)= b$, then $G= K_{1, b}$ since $e(G)= b$, and we are done. Suppose that $\Delta(G)= b- 1$. Note that $G$ is the union of $K_{1, b- 1}$ and $K_{1, 1}$ (they have at least one common vertex). If $b_{1}\geq 3$, then color the edge not in $K_{1, b- 1}$ by color $1$, and color $b_{s}- 1$ edges of $K_{1, b- 1}$ by color $s$ for each $s\in [t]$. Thus, $G\not\rightarrow \left(K_{1, b_{1}}, K_{1, b_{2}}, \dots, K_{1, b_{t}}\right)$ since $b_{1}\geq 3$. This is a contradiction.
    
    Consequently, $b_{1}= 2$. Suppose that $2= b_{1}= b_{2}= \cdots= b_{q}> b_{q+ 1}= b_{q+ 2}= \cdots= b_{t}= 1$ for some $q\in [t]$. In this case, $b= q+ 1$. If $q\geq 3$, then $b= q+ 1\geq 4$. Thus, $G$ contains a copy of $2K_{1, 1}$. Color the copy of $2K_{1, 1}$ by color $1$. After that, there are $b- 2= q- 1$ uncolored edges. Color one of them by color $s$ for each $s\in [q]\backslash [1]$. Thus, $G\not\rightarrow \left(K_{1, b_{1}}, K_{1, b_{2}}, \dots, K_{1, b_{t}}\right)$. This is a contradiction.
    
    Consequently, $q\leq 2$. If $q= 1$, then $e(G)= b= q+ 1= 2$. It is impossible since $G$ is connected and $\Delta(G)= b- 1= 1$. Consequently, $q= 2$, and thus $b= q+ 1= 3$. Therefore, $G= P_{4}$ or $G= K_{3}$. Note that $P_{4}\not\rightarrow (K_{1, 2}, K_{1, 2}, K_{1, 1}, \dots, K_{1, 1})$. Consequently, $G= K_{3}$, and we are done.\q
  \end{proof}
  
  \begin{theo}\label{theo3.3}
    Suppose that $a_{1}\geq a_{2}\geq \cdots\geq a_{t}\geq 1$. If $G$ is a size Ramsey minimal graph for $\left(a_{1}K_{1, 1}, a_{2}K_{1, 1}, \dots, a_{t}K_{1, 1}\right)$, then $G= (a- t+ 1)K_{1, 1}$, where $a= \sum_{s= 1}^{t} a_{s}$.
  \end{theo}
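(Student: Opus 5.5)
Here $b_{1}= \cdots= b_{t}= 1$, so $b= 1$. By Theorem \ref{theo1.4}, $e(G)= (a- t+ 1)\cdot 1= a- t+ 1$, and Fact \ref{fact3.1} gives $0\leq \Delta(G)\leq 1$. The plan is to show that these two facts already force $G$.

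Since isolated vertices are ignored and $e(G)= a- t+ 1\geq 1$, $G$ has at least one edge, so $\Delta(G)= 1$. A graph of maximum degree $1$ is a matching. With $e(G)= a- t+ 1$ edges and no isolated vertices, this means $G= (a- t+ 1)K_{1, 1}$, which is the claim.

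The only step needing care is the upper bound $\Delta(G)\leq b$ from Fact \ref{fact3.1}. When $a= t$ it is trivial, since then $e(G)= 1$. When $a\geq t+ 1$ it is exactly the argument of Fact \ref{fact3.1}: combine Lemma \ref{lemma2.2} with Theorem \ref{theo1.4} applied to the reduced tuple. That argument is valid here because the reduced tuple still has positive entries. So no obstacle remains. One might also check that $(a- t+ 1)K_{1, 1}$ really arrows the tuple, by pigeonhole on the $a- t+ 1$ edges among the $t$ colors. This is implied by Theorem \ref{theo1.4}, however, and is not needed for the characterization.
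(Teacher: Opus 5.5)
Your proof is correct, but you reach the key intermediate claim $\Delta(G)\le 1$ by a different route than the paper.

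The paper does not invoke Fact~\ref{fact3.1} here. Its base case $a=t$ comes from Lemma~\ref{lemma3.2}. For $a\ge t+1$, the ordering of the $a_s$ guarantees $a_1\ge 2$, and the paper argues directly with a coloring:
\begin{itemize}
\item if some component has at least two edges, color two incident edges with color $1$;
\item this $K_{1,2}$ contains no $2K_{1,1}$ in color $1$;
\item the remaining $a-t-1$ edges are distributed via Fact~\ref{fact2.3}, with $a_1-2$ more in color $1$ and $a_s-1$ in color $s\ge 2$;
\item this gives a bad coloring, a contradiction.
\end{itemize}

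You instead specialize the general upper bound $\Delta(G)\le b$ of Fact~\ref{fact3.1}. That bound comes from deleting a maximum-degree vertex, applying Lemma~\ref{lemma2.2}, and comparing with Zhang's value $\hat{r}=(a-t)b$ for the reduced tuple.

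What each route buys:
\begin{itemize}
\item Yours is shorter. It needs neither the ordering $a_1\ge a_2\ge\cdots\ge a_t$ nor Lemma~\ref{lemma3.2}. It also shows that for $b=1$ the degree bound alone already forces $G$.
\item The paper's avoids applying Theorem~\ref{theo1.4} to a second tuple. It exhibits an explicit bad coloring of any $(a-t+1)$-edge graph with two incident edges. This matches the local-coloring-plus-Fact~\ref{fact2.3} template used throughout Section~\ref{sec3}.
\end{itemize}
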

  \begin{proof}
    The assertion holds for $a= t$ by Lemma \ref{lemma3.2}. Suppose that $a\geq t+ 1$, and thus $a_{1}\geq 2$. Note that $e(G)= a- t+ 1$ by Theorem \ref{theo1.4}. Let $C_{1}, C_{2}, \cdots, C_{c}$ be components of $G$, and then $c\geq 1$. Moreover, suppose that $e(C_{1})\geq e(C_{2})\geq \cdots\geq e(C_{c})$. If $e(C_{1})\geq 2$, then color two incident edges in $C_{1}$ by color $1$. Let $G'$ be the graph induced by colored edges. Note that $G'\not\rightarrow (2K_{1, 1}, K_{1, 1}, \dots, K_{1, 1})$. Moreover, there are $a- t+ 1- 2= a- t- 1$ uncolored edges. Thus, $G\not\rightarrow \left(a_{1}K_{1, 1}, a_{2}K_{1, 1}, \dots, a_{t}K_{1, 1}\right)$ by Fact \ref{fact2.3}. This is a contradiction. Consequently, $e(C_{1})= 1$. Furthermore, $G= (a- t+ 1)K_{1, 1}$ since $e(G)= a- t+ 1$, and we are done.\q
  \end{proof}
  
  \begin{theo}\label{theo3.4}
    Suppose that $a_{1}\geq 1$ and $a_{2}\geq a_{3}\geq \cdots\geq a_{t}\geq 1$. If $G$ is a size Ramsey minimal graph for $\left(a_{1}K_{1, 2}, a_{2}K_{1, 1}, a_{3}K_{1, 1}, \dots, a_{t}K_{1, 1}\right)$, then $G= (a- t+ 1)K_{1, 2}$. Moreover, if $a_{1}= 1$, then also $G= cC_{4}\bigsqcup (a- t+ 1- 2c)K_{1, 2}$ for some $c\in [\lfloor (a- t+ 1)/2\rfloor]$.
  \end{theo}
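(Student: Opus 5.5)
By Fact \ref{fact3.1}, $\Delta(G)\le b$, and here $b=2+(t-1)-t+1=2$. So every component of $G$ is a path or a cycle, and $e(G)=2(a-t+1)$ by Theorem \ref{theo1.4}. Write $A=a-t+1$, so $e(G)=2A$. The plan is to show that every component is a $K_{1,2}$ or a $C_4$, and that $C_4$ components can occur only when $a_1=1$. For each forbidden component I will build an explicit $t$-coloring of $E(G)$ with no monochromatic $a_1K_{1,2}$ in color $1$ and no $a_sK_{1,1}$ in color $s$ for $s\ge2$, contradicting $G\rightarrow(\cdots)$. Two facts will be used throughout. A copy of $K_{1,1}$ is a single edge, so color $s\ge2$ is safe as long as its edges contain no matching of size $a_s$. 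Color $1$ is safe as long as its edges contain no $a_1$ vertex-disjoint copies of $P_3$.

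\textbf{Accounting device.} I will not use Fact \ref{fact2.3} for the leftover edges: each edge it adds raises a target by one, which is too generous given the budget of exactly two edges per unit. Instead I will pair up edges. Two edges of $G$ that share no vertex with each other can both receive color $s\ge2$ and raise the color-$s$ target by only $2$. Two adjacent edges forming a $P_3$ can be given color $1$ as one copy of $K_{1,2}$; this is the Fact \ref{fact2.4}-type step with $b=2$. The key step is to locate a ``gadget'' subgraph $G'\subseteq G$ that beats this two-edges-per-unit rate. That means $G'$ admits a good coloring with respect to targets $(c_1,\dots,c_t)$ while $e(G')\ge 2(\sum_s c_s-t+1)-1$. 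The remaining $2A-e(G')$ edges are then split into $P_3$'s and pairs of edges and absorbed, each unit raising one target by one, so that the final targets are exactly $(a_1,\dots,a_t)$. Because the rate in the gadget is strictly better, the totals work out, though one must check that the parity and the shapes of the leftover components allow such a splitting.

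\textbf{Gadgets.} First, $P_4$ is colored middle edge in color $2$ and end edges in color $1$. This has no color-$1$ $P_3$ and only one color-$2$ edge, so it is good for $(K_{1,2},2K_{1,1},K_{1,1},\dots)$ using $3$ edges instead of $4$. Second, a path $P_k$ with $k\ge5$ contains $P_4$. Third, cycles $C_k$ with $k=3$ or $k\ge5$ are handled by alternating colors $1$ and $2$ around the cycle, with one defect at the odd place or with a $P_4$-type segment; this gives a similar saving. Fourth, a $C_4$ colored alternately with colors $1$ and $2$ has no color-$1$ $P_3$ and color-$2$ matching number $2$. This gives no saving in color $2$ alone, but if $a_1\ge2$ it can be combined with a color-$1$ $P_3$ elsewhere: alternatively color $C_4$ as two color-$1$ edges plus a color-$2$ matching, or use a split in which the $C_4$ spends the color-$1$ budget more efficiently. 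I would try each of these on small cases first. Isolated edges, i.e.\ $K_{1,1}$ components, will be handled by coloring them with color $1$ for free, since a single edge contains no $P_3$; this is an even better saving.

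Once all other shapes are excluded, $G$ is a disjoint union of $K_{1,2}$'s and, when $a_1=1$, possibly $C_4$'s, and the edge count $2A$ gives exactly the forms in the statement. It then remains to verify that $cC_4\bigsqcup(A-2c)K_{1,2}$ actually arrows when $a_1=1$. The argument is that any color-$1$ subgraph with no $P_3$ is a matching, and every other color class in a $C_4$ or in a $K_{1,2}$ forces matchings in colors $2,\dots,t$ whose sizes add up to too much.

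\textbf{Main obstacle.} I expect the main obstacle to be the $C_4$ exclusion when $a_1\ge2$. There the saving is not local to the $C_4$: it must be achieved by coloring the $C_4$ together with another component, so that color $1$ is charged once while three or four edges are placed. I would first try coloring two adjacent edges of the $C_4$ with color $1$ and the other two, a $P_3$, with colors $2$ and $3$ (or color $2$ twice, since they are adjacent and form no $2K_{1,1}$). The last option costs only one unit of color $2$ for two edges, and this appears to be the needed saving.
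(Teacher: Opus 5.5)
The genuine gap is the step you flag as the main obstacle: excluding a $C_4$ component when $a_1\ge 2$. The colorings you propose for it do not achieve any saving.

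Count a coloring by its \emph{units}: the maximum number of disjoint color-$1$ copies of $K_{1,2}$, plus the matching numbers of colors $2,\dots,t$. To reach a contradiction you need a coloring using at most $a_s-1$ units in each color $s$, hence at most $a-t=A-1$ units in total. Every $K_{1,2}$ component forces at least one unit.

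Each coloring you suggest for the $C_4$ uses at least two units. These are a color-$1$ $P_3$ plus a color-$2$ $P_3$, the same with colors $2$ and $3$, and two color-$1$ edges plus a color-$2$ matching. That is the same rate as two $K_{1,2}$'s. So for $G=C_4\sqcup(A-2)K_{1,2}$ your coloring uses at least $A$ units and gives no contradiction.

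Contrary to your remark, the saving is local: color all four edges of the $C_4$ with color $1$. Since $C_4$ has only four vertices, it contains a $K_{1,2}$ but not $2K_{1,2}$. This costs a single unit of color $1$, which is available precisely because $a_1\ge 2$. This is exactly what the paper does. It colors the $C_4$ (and likewise the $P_5$ arising in Subcase 1.1) entirely by color $1$, puts a maximum matching of the remaining edges in color $1$, and applies Fact~\ref{fact2.3} to the $a-t-1$ uncolored edges.

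Your accounting also needs repair before the direct component-wise route can close.

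\begin{itemize}
\item Two vertex-disjoint edges in one color $s\ge 2$ cost two units, not one. The two-edges-per-unit rate comes from two \emph{adjacent} edges in one color $s\ge2$, or from one edge added to a color-$1$ matching plus one edge charged via Fact~\ref{fact2.3}.
\item Your gadget threshold $e(G')\ge 2(\sum_s c_s-t+1)-1$ is too weak. After the $P_4$ gadget (one unit, three edges), $2A-3$ edges remain. Absorbing them at two edges per unit, rounding up, needs $A-1$ more units when only $A-2$ are left.
\end{itemize}

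The parity problem you mention is real. What resolves it is a per-component bound. Every path or cycle $C$ can be colored with color $1$ forming a matching (pattern $1,s,s,1,s,s,\dots$ along $C$) using at most $\lfloor e(C)/2\rfloor$ units. This is strictly fewer than $e(C)/2$ unless $C\in\{K_{1,2},C_4\}$. You also need to check that units can be shifted onto color $1$ when colors $2,\dots,t$ lack capacity; for example, if $a_2=1$, every edge must receive color $1$.

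With that lemma and the all-color-$1$ $C_4$, your route would be a valid and arguably cleaner alternative to the paper's induction on $a$. The paper deletes a degree-$2$ vertex $v$, uses Lemma~\ref{lemma2.2} and Theorem~\ref{theo1.4} to see that $G-\{v\}$ is minimal for a smaller tuple, and analyses how $v$ attaches. As written, however, your proposal contains neither ingredient.
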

  \begin{proof}
    We use induction in $a$. The assertion holds for $a= t$ by Lemma \ref{lemma3.2}. Suppose that the assertion holds for $a- 1$ and $a\geq t+ 1$. Thus, $a_{1}\geq 2$ or $a_{2}\geq 2$. Note that $e(G)= 2(a- t+ 1)$ by Theorem \ref{theo1.4}, and $1\leq \Delta(G)\leq 2$ by Fact \ref{fact3.1} since $b= 2$. If $\Delta(G)= 1$, then $G= 2(a- t+ 1)K_{1, 1}$. Color all edges of $G$ by color $1$, and thus $G\not\rightarrow \left(a_{1}K_{1, 2}, a_{2}K_{1, 1}, \dots, a_{t}K_{1, 1}\right)$. This is a contradiction. Consequently, $\Delta(G)= 2$. Let $v\in V(G)$ be a vertex such that $d_{G}(v)= \Delta (G)= 2$, and thus $e(G- \{v\})= e(G)- d_{G}(v)= 2(a- t+ 1)- 2= 2(a- t)$. In the following, we divide the discussion into two parts.
    
    {\bf Case 1}. $a_{1}\geq 2$.
    
    Note that $G- \{v\}\rightarrow \left((a_{1}- 1)K_{1, 2}, a_{2}K_{1, 1}, a_{3}K_{1, 1}, \dots, a_{t}K_{1, 1}\right)$ by Lemma \ref{lemma2.2} since $a_{1}\geq 2$. Thus, $G- \{v\}$ is a size Ramsey minimal graph by Theorem \ref{theo1.4}. Furthermore, by the induction hypothesis, $G- \{v\}= (a- t)K_{1, 2}$, or $G- \{v\}= cC_{4}\bigsqcup (a- t- 2c)K_{1, 2}$ if $a_{1}= 2$. Delete all isolated vertices in $G- \{v\}$ and denote the resulting graph by $H_{1}$. 
    
    If $N_{G}(v)\cap V(H_{1})= \emptyset$, then $G= (a- t+ 1)K_{1, 2}$, or $G= cC_{4}\bigsqcup (a- t- 2c+ 1)K_{1, 2}$ if $a_{1}= 2$. We only need to consider the latter. Color a copy of $C_{4}$ by color $1$. Furthermore, color a maximum matching of the uncolored edges by color $1$. Let $G_{1}'$ be the graph induced by colored edges. Note that $G'_{1}\not\rightarrow (2K_{1, 2}, K_{1, 1}, K_{1, 1}, \dots, K_{1, 1})$. Moreover, there are $2(c- 1)+ a- t- 2c+ 1= a- t- 1$ uncolored edges. Thus, $G\not\rightarrow \left(a_{1}K_{1, 2}, a_{2}K_{1, 1}, a_{3}K_{1, 1}, \dots, a_{t}K_{1, 1}\right)$ by Fact \ref{fact2.3}. This is a contradiction. Suppose that $N_{G}(v)\cap V(H_{1})\neq\emptyset$. Note that there are at most two components of $H_{1}$ such that each of them contains at least one vertex of $N_{G}(v)$ since $d_{G}(v)= 2$. 
    
    {\bf Subcase 1.1}. There is only one component of $H_{1}$ such that contains at least one vertex of $N_{G}(v)$.
    
    Denote the vertex set of the component by $A$. Note that $H_{1}[A]= K_{1, 2}$ since $\Delta(G)= 2$. Thus, $H_{1}- A= (a- t- 1)K_{1, 2}$ or $H_{1}- A= cC_{4}\bigsqcup (a- t- 2c- 1)K_{1, 2}$ if $a_{1}= 2$. Moreover, $G[N_{G}[v]\cup A]= P_{5}$ or $G[N_{G}[v]\cup A]= C_{4}$. Color all edges of $G[N_{G}[v]\cup A]$ and a maximum matching of $H_{1}- A$ by color $1$. Let $G_{2}'$ be the graph induced by colored edges. Note that $G'_{2}\not\rightarrow (2K_{1, 2}, K_{1, 1}, K_{1, 1} \dots, K_{1, 1})$. Moreover, there are $a- t- 1$ uncolored edges. Thus, $G\not\rightarrow \left(a_{1}K_{1, 2}, a_{2}K_{1, 1}, a_{3}K_{1, 1}, \dots, a_{t}K_{1, 1}\right)$ by Fact \ref{fact2.3}. This is a contradiction.
    
    {\bf Subcase 1.2}. There are two components of $H_{1}$ such that each of them contains at least one vertex of $N_{G}(v)$.
    
    Denote the vertex set of these components by $B_{1}$ and $B_{2}$, respectively. Moreover, let $B= B_{1}\cup B_{2}$. Note that $H_{1}[B_{1}]= K_{1, 2}$ and $H_{1}[B_{2}]= K_{1, 2}$ since $\Delta(G)= 2$. Thus, $H_{1}- B= (a- t- 2)K_{1, 2}$ or $H_{1}- B= cC_{4}\bigsqcup (a- t- 2c- 2)K_{1, 2}$ if $a_{1}= 2$. Moreover, $G[N_{G}[v]\cup B]= P_{7}$. Note that $a\geq t+ 2$ since $a- t- 2$ is a nonnegative integer, and thus $a_{1}\geq 3$ or $a_{1}= a_{2}= 2, a_{3}= 1$. Color a maximum matching of $H_{1}- B$ by color $1$. If $a_{1}\geq 3$, then color all edges of $G[N_{G}[v]\cup B])$ by color $1$. Let $G_{3}'$ be the graph induced by colored edges. Note that $G'_{3}\not\rightarrow (3K_{1, 2}, K_{1, 1}, K_{1, 1} \dots, K_{1, 1})$. If $a_{1}= a_{2}= 2$ and $a_{3}= 1$, then color a copy of $P_{5}$ of $G[N_{G}[v]\cup B]$ by color $1$ and other edges of $G[N_{G}[v]\cup B]$ by color $2$. Let $G_{4}'$ be the graph induced by colored edges. Note that $G'_{4}\not\rightarrow (2K_{1, 2}, 2K_{1, 1}, K_{1, 1} \dots, K_{1, 1})$. 
    
    In both cases, there are $a- t- 2$ uncolored edges. Thus, $G\not\rightarrow \left(a_{1}K_{1, 2}, a_{2}K_{1, 1}, a_{3}K_{1, 1}, \dots, a_{t}K_{1, 1}\right)$ by Fact \ref{fact2.3}. This is a contradiction.
    
    {\bf Case 2}. $a_{1}= 1$.
    
    In this case, we only need to show that $G= c'C_{4}\bigsqcup (a- t- 2c'+ 1)K_{1, 2}$ for some nonnegative integer $c'$. Note that $a_{2}\geq 2$ by our assumption, and thus 
    $$G- \{v\}\rightarrow \left(K_{1, 2}, (a_{2}- 1)K_{1, 1}, a_{3}K_{1, 1}, a_{4}K_{1, 1}, \dots, a_{t}K_{1, 1}\right)$$
    by Lemma \ref{lemma2.2}. Moreover, $G- \{v\}$ is a size Ramsey minimal graph by Theorem \ref{theo1.4}. Furthermore, by the induction hypothesis, $G- \{v\}= c'C_{4}\bigsqcup (a- t- 2c')K_{1, 2}$. Delete all isolated vertices in $G- \{v\}$ and denote the resulting graph by $H_{2}$. If $N_{G}(v)\cap V(H_{2})= \emptyset$, then $G= c'C_{4}\bigsqcup (a- t- 2c'+ 1)K_{1, 2}$, and we are done. Suppose that $N_{G}(v)\cap V(H_{2})\neq\emptyset$. Note that there are at most two components of $H_{2}$ such that each of them contains at least one vertex of $N_{G}(v)$ since $d_{G}(v)= 2$. 
    
    {\bf Subcase 2.1}. There is only one component of $H_{2}$ such that contains at least one vertex of $N_{G}(v)$.
    
    Denote the vertex set of the component by $C$. Note that $H_{2}[C]= K_{1, 2}$ since $\Delta(G)= 2$. Thus, $H_{2}- C= c'C_{4}\bigsqcup (a- t- 2c'- 1)K_{1, 2}$. Moreover, $G[N_{G}[v]\cup C]= P_{5}$ or $G[N_{G}[v]\cup C]= C_{4}$, and we only need to consider the former. Color a maximum matching of $H_{2}- C$ by color $1$, and color $E_{G}(G[N_{G}[v]\cup C])$ as Figure \ref{f1.1} shows. Let $G_{5}'$ be the graph induced by colored edges. Note that $G'_{5}\not\rightarrow (K_{1, 2}, 2K_{1, 1}, K_{1, 1} \dots, K_{1, 1})$. Moreover, there are $2c'+ a- t- 2c'- 1= a- t- 1$ uncolored edges. Thus, $G\not\rightarrow \left(a_{1}K_{1, 2}, a_{2}K_{1, 1}, a_{3}K_{1, 1}, \dots, a_{t}K_{1, 1}\right)$ by Fact \ref{fact2.3}. This is a contradiction.
     
    \begin{figure}[H]
      \centering
      \includegraphics[scale=0.2]{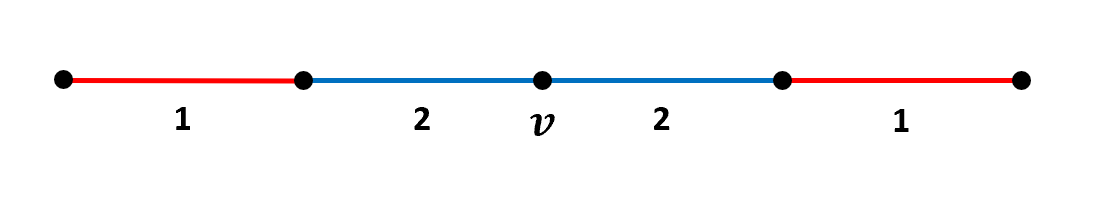}
      \caption{$G[N_{G}(v)\cup C]\not\rightarrow (K_{1, 2}, 2K_{1, 1}, K_{1, 1} \dots, K_{1, 1})$}
      \label{f1.1}
    \end{figure}
    
    {\bf Subcase 1.2}. There are two components of $H_{2}$ such that each of them contains at least one vertex of $N_{G}(v)$.
    
    Denote the vertex set of these components by $D_{1}$ and $D_{2}$, respectively. Moreover, let $D= D_{1}\cup D_{2}$. Note that $H_{2}[D_{1}]= K_{1, 2}$ and $H_{2}[D_{2}]= K_{1, 2}$ since $\Delta(G)= 2$. Thus, $H_{2}- D= c'C_{4}\bigsqcup (a- t- 2c'- 2)K_{1, 2}$. Moreover, $G[N_{G}(v)\cup D]= P_{7}$. Note that $a\geq t+ 2$ since $a- t- 2c'- 2$ and $c'$ are nonnegative integers, and thus $a_{2}\geq 3$ or $a_{2}= a_{3}= 2, a_{4}= 1$. Color a maximum matching of $H_{2}- D$ by color $1$. If $a_{2}\geq 3$, then color $E_{G}(G[N_{G}[v]\cup D])$ as Figure \ref{f1.2} shows. Let $G_{6}'$ be the graph induced by colored edges. Note that $G'_{6}\not\rightarrow (K_{1, 2}, 3K_{1, 1}, K_{1, 1} \dots, K_{1, 1})$. If $a_{2}= a_{3}= 2$ and $a_{4}= 1$, then color $G[N_{G}[v]\cup D]$ as Figure \ref{f1.3} shows. Let $G_{7}'$ be the graph induced by colored edges. Note that $G'_{7}\not\rightarrow (K_{1, 2}, 2K_{1, 1}, 2K_{1, 1}, K_{1, 1}, \dots, K_{1, 1})$. 
    
    \begin{figure}[H]
      \centering
      \includegraphics[scale=0.5]{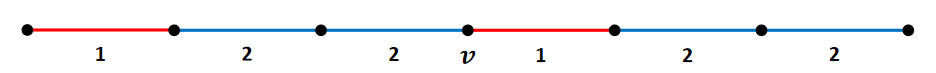}
      \caption{$G[N_{G}(v)\cup D]\not\rightarrow (K_{1, 2}, 3K_{1, 1}, K_{1, 1} \dots, K_{1, 1})$}
      \label{f1.2}
    \end{figure}
    
    \begin{figure}[H]
      \centering
      \includegraphics[scale=0.5]{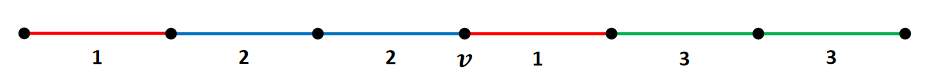}
      \caption{$G[N_{G}(v)\cup D]\not\rightarrow (K_{1, 2}, 2K_{1, 1}, 2K_{1, 1}, K_{1, 1}, \dots, K_{1, 1})$}
      \label{f1.3}
    \end{figure}
    
    In both cases, there are $2c'+ a- t- 2c'- 2= a- t- 2$ uncolored edges. Thus, $G\not\rightarrow \left(a_{1}K_{1, 2}, a_{2}K_{1, 1}, a_{3}K_{1, 1}, \dots, a_{t}K_{1, 1}\right)$ by Fact \ref{fact2.3}. This is a contradiction.\q
  \end{proof}
  
  \begin{theo}\label{theo3.5}
    Suppose that $a_{1}\geq a_{2}\geq 1$ and $a_{3}\geq a_{4}\geq \cdots\geq a_{t}\geq 1$. If $G$ is a size Ramsey minimal graph for $\left(a_{1}K_{1, 2}, a_{2}K_{1, 2}, a_{3}K_{1, 1}, a_{4}K_{1, 1} \dots, a_{t}K_{1, 1}\right)$, then $G= cK_{3}\bigsqcup (a- t+ 1- c)K_{1, 3}$ for some $c\in \{0\}\cup [a- t+ 1]$, where $a= \sum_{s= 1}^{t} a_{s}$. Moreover, if $a_{1}= 1$, then also $G= cK_{3}\bigsqcup c'K_{4}\bigsqcup (a- t+ 1- c- 2c')K_{1, 3}$ for some nonnegative integers $c$ and $c'$ such that $1\leq c+ 2c'\leq a- t+ 1$.
  \end{theo}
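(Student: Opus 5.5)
We argue by induction on $a$, following the template of Theorem \ref{theo3.4}. Here $b=3$, so $e(G)=3(a-t+1)$ by Theorem \ref{theo1.4}, and $2\le \Delta(G)\le 3$ by Fact \ref{fact3.1}. The base case $a=t$ is Lemma \ref{lemma3.2}, which gives $G=K_{1,3}$ or $G=K_{3}$. For $a_{1}=a_{2}=1$ the statement also allows $c+2c'\le a-t+1=1$, which forces $c'=0$, so $K_4$ does not appear and the base case is consistent.

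For the induction step assume $a\ge t+1$. We first treat the case $\Delta(G)=2$. Then $G$ is a disjoint union of paths and cycles. Our plan is to exhibit a bad colouring, except when every component is a triangle. Give color $1$ to a spanning collection of disjoint $P_3$'s and $K_2$'s containing no $2K_{1,2}$ where possible, put a perfect-matching-like subset into color $2$, and invoke Fact \ref{fact2.3} for the leftover edges. If this becomes messy, we instead remove a degree-2 vertex and argue as in the $\Delta=3$ case below, with the components now $K_3$'s.

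Now assume $\Delta(G)=3$ and pick $v$ with $d(v)=3$. Choose $s$ with $a_s\ge 2$, preferring $s\in\{1,2\}$ when possible. By Lemma \ref{lemma2.2}, $G-\{v\}$ arrows the reduced tuple, and since $e(G-\{v\})=3(a-t)$, Theorem \ref{theo1.4} shows it is size Ramsey minimal. The induction hypothesis then gives $H:=G-\{v\}$ (isolated vertices deleted) as $cK_3\sqcup(a-t-c)K_{1,3}$, possibly with $K_4$'s if the reduced $a_1$ (or $a_2$) equals $1$. If $N(v)\cap V(H)=\emptyset$, then $G=H\sqcup K_{1,3}$, which has the required form. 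The exception is when $K_4$'s are inherited but the original $a_1\ge2$. There we must show $K_4$ is forbidden: we colour one $K_4$ with two edge-disjoint $P_4$-free colourings and use Fact \ref{fact2.3}. Concretely, $K_4$ decomposes into a $C_4$ and a $2K_2$; colour the $C_4$ as two matchings, one in color $1$ and one in color $2$, and the remaining $2K_2$ with color $1$. Each colour class then has maximum degree at most $2$ with few $P_3$'s, so the $K_4$ contributes at most one $K_{1,2}$ per colour. This saves one unit relative to the edge count and yields $G\not\rightarrow$.

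If $N(v)$ meets $H$, then since $\Delta(G)=3$ the neighbours of $v$ lie in components of $H$ at vertices of degree $<3$: leaves of $K_{1,3}$'s or vertices of $K_3$'s. There are at most three such components. For each pattern (one, two or three components touched), we colour $G[N[v]\cup\text{touched components}]$ by hand so that it avoids $(p_1+1)K_{1,2},(p_2+1)K_{1,2},(p_3+1)K_{1,1},\dots$, where $\sum_s p_s$ equals the number of touched components. We then colour the rest of $H$ componentwise, each component contributing one star of some colour, using Fact \ref{fact2.4}, or Fact \ref{fact2.3} with a maximum matching for triangles. This contradicts $G\rightarrow$, so the untouched case is the only possibility.

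The main obstacle is this local colouring step. The touched subgraph has $3k+3$ edges when $k$ components are touched, and it must be coloured so that each colour class contains few vertex-disjoint stars. This has to be done for every attachment pattern: a leaf versus the centre side of a $K_{1,3}$, one or two edges into the same triangle, and so on. It must also be done for each distribution of the $a_s$, in particular when only the $K_{1,1}$-colours have $a_s\ge2$. We would first try colouring all edges incident to $v$, plus one star per touched component, with colours $1$ and $2$, forming long paths and cycles. A path $P_{2k+1}$ in one colour contains at most $\lceil k\rceil$ disjoint $K_{1,2}$'s, so colouring the union as two such paths should absorb the extra three edges.
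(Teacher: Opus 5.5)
Your skeleton is the paper's: induction on $a$, $\Delta(G)\in\{2,3\}$, deleting a degree-$3$ vertex $v$, identifying $G-\{v\}$ via Lemma~\ref{lemma2.2}, Theorem~\ref{theo1.4} and the induction hypothesis, then defeating each attachment of $v$ with a local colouring plus Fact~\ref{fact2.3}. Your colouring that rules out an inherited $K_4$ when $a_1\ge 2$ is essentially fine. The step that carries the content, however, is both missing and misstated.

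You assert that every touched pattern admits a bad colouring, so only the untouched case survives. That is false. If $v$ is joined to all three vertices of a $K_3$-component of $H$, then $G$ acquires a $K_4$ component. When $a_1=a_2=1$ this is a genuine minimal configuration: for example, $K_4\rightarrow(K_{1,2},K_{1,2},2K_{1,1})$ and $e(K_4)=6=3(a-t+1)$. The base case contains no $K_4$, so this pattern is the only way $K_4$ components ever arise, and your plan as written would ``prove'' they never occur. This pattern must be accepted when $a_1=a_2=1$ and killed by your $K_4$ argument when $a_1\ge 2$; the paper sets it aside explicitly.

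For the remaining patterns you supply no colourings at all, only a heuristic that ignores the budget. When $a_1=a_2=1$, colours $1$ and $2$ must be matchings on the touched subgraph, so the long monochromatic paths and cycles you propose in those colours are forbidden outright. All $k$ units of slack must then be absorbed by the $K_{1,1}$-colours, with colour $s$ receiving an edge set of matching number at most $p_s$. The paper's proof consists precisely of these explicit colourings, one for each attachment graph and each distribution of the $a_s$.

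The case $\Delta(G)=2$ is also not actually handled, and your fallback fails. Deleting a degree-$2$ vertex leaves $3(a-t)+1$ edges, so $G-\{v\}$ is not size Ramsey minimal and the induction hypothesis gives no structure. The missing idea is a simple count. Colour each path and each cycle of $G$ alternately with colours $1$ and $2$. This creates no monochromatic $K_{1,2}$ and leaves exactly one uncoloured edge per odd cycle. If the number $c$ of odd cycles satisfies $c\le a-t$, Fact~\ref{fact2.3} gives $G\not\rightarrow$. Hence $c\ge a-t+1$, and since each odd cycle has at least three edges, $3c\le e(G)=3(a-t+1)$ forces $G=(a-t+1)K_3$.
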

  \begin{proof}
    We use induction in $a$. The assertion holds for $a= t$ by Lemma \ref{lemma3.2}. Thus, suppose that the assertion holds for $t- 1$ and $a\geq t+ 1$. Thus, $a_{1}\geq 2$ or $a_{3}\geq 2$. Note that $e(G)= 3(a- t+ 1)$ by Theorem \ref{theo1.5}, and $2\leq \Delta(G)\leq 3$ by Fact \ref{fact3.1} since $b= 3$.
    
    If $\Delta(G)= 2$, then $G$ is the union of paths and cycles. Let $c$ be the number of odd cycles in $G$. Color a maximum matching of $G$ by color $1$. After that, color a maximum matching of the uncolored edges of $G$ by color $2$. Let $G'_{1}$ be the graph induced by colored edges. Note that $G'_{1}\not\rightarrow (K_{1, 2}, K_{1, 2}, K_{1, 1}, \dots, K_{1, 1})$. Moreover, there are $c$ uncolored edges. If $c\leq a- t$, then $G\not\rightarrow \left(a_{1}K_{1, 2}, a_{2}K_{1, 2}, a_{3}K_{1, 1}, a_{4}K_{1, 1}, \dots, a_{t}K_{1, 1}\right)$ by Fact \ref{fact2.3}. This is a contradiction. Thus, $c\geq a- t+ 1$. Note that $3c\leq e(G)= 3(a- t+ 1)$. Consequently, $c= a- t+ 1$. Furthermore, $G= (a- t+ 1)K_{3}$, and we are done. Suppose that $\Delta(G)= 3$. Let $v\in V(G)$ be a vertex such that $d_{G}(v)= \Delta (G)= 3$, and thus $e(G- \{v\})= e(G)- d_{G}(v)= 3(a- t+ 1)- 3= 3(a- t)$. In the following, we divide the discussion into two parts.
    
    {\bf Case 1}. $a_{1}\geq 2$.
    
    Note that $G- \{v\}\rightarrow \left((a_{1}- 1)K_{1, 2}, a_{2}K_{1, 2}, a_{3}K_{1, 1}, a_{4}K_{1, 1}, \dots, a_{t}K_{1, 1}\right)$ by Lemma \ref{lemma2.2} since $a_{1}\geq 2$. Thus, $G- \{v\}$ is a size Ramsey minimal graph by Theorem \ref{theo1.4}. Furthermore, by the induction hypothesis, $G- \{v\}= cK_{3}\bigsqcup (a- t- c)K_{1, 3}$ or $G- \{v\}= cK_{3}\bigsqcup c'K_{4}\bigsqcup (a- t- c- 2c')K_{1, 3}$ if $a_{1}= 2$. Delete all isolated vertices in $G- \{v\}$ and denote the resulting graph by $H_{1}$. If $N_{G}(v)\cap V(H_{1})= \emptyset$, then $G= cK_{3}\bigsqcup (a- t+ 1- c)K_{1, 3}$ or $G= cK_{3}\bigsqcup c'K_{4}\bigsqcup (a- t+ 1- c- 2c')K_{1, 3}$ if $a_{1}= 2$. We only need to consider the latter. Color a copy of $K_{4}$ by color $1$. Moreover, color a maximum matching of the uncolored edges by color $1$ and color the other maximum matching of the uncolored edges by color $2$. Let $G_{2}'$ be the graph induced by colored edges. Note that $G'_{2}\not\rightarrow (2K_{1, 2}, K_{1, 1}, K_{1, 1}, \dots, K_{1, 1})$. Moreover, there are $c+ 2(c'- 1)+ a- t+ 1- c- 2c'= a- t- 1$ uncolored edges. Thus, $G\not\rightarrow \left(a_{1}K_{1, 2}, a_{2}K_{1, 1}, a_{3}K_{1, 1}, \dots, a_{t}K_{1, 1}\right)$ by Fact \ref{fact2.3}. This is a contradiction. Suppose that $N_{G}(v)\cap V(H_{1})\neq\emptyset$. Note that there are at most three components of $H_{1}$ such that each of them contains at least one vertex of $N_{G}(v)$ since $d_{G}(v)= 3$.
    
    {\bf Case 1.1}. There is only one component of $H_{1}$ such that contains at least one vertex of $N_{G}(v)$.
    
    Denote the vertex set of the component by $A$. Note that $H_{1}[A]\neq K_{4}$ since $\Delta(G)= 3$. Thus, $H_{1}- A= (c- i)K_{3}\bigsqcup (a- t- c- j)K_{1, 3}$ or $H_{1}- A= (c- i)K_{3}\bigsqcup c'K_{4}\bigsqcup (a- t- c- 2c'- j)K_{1, 3}$ if $a_{1}= 2$, where $i$ and $j$ are nonnegative integers such that $i+ j= 1$. Moreover, $G[N_{G}[v]\cup A]$ is one of the graphs in Figure \ref{f2.1}.
    
    \begin{figure}[H]
      \centering
      \includegraphics[scale=0.2]{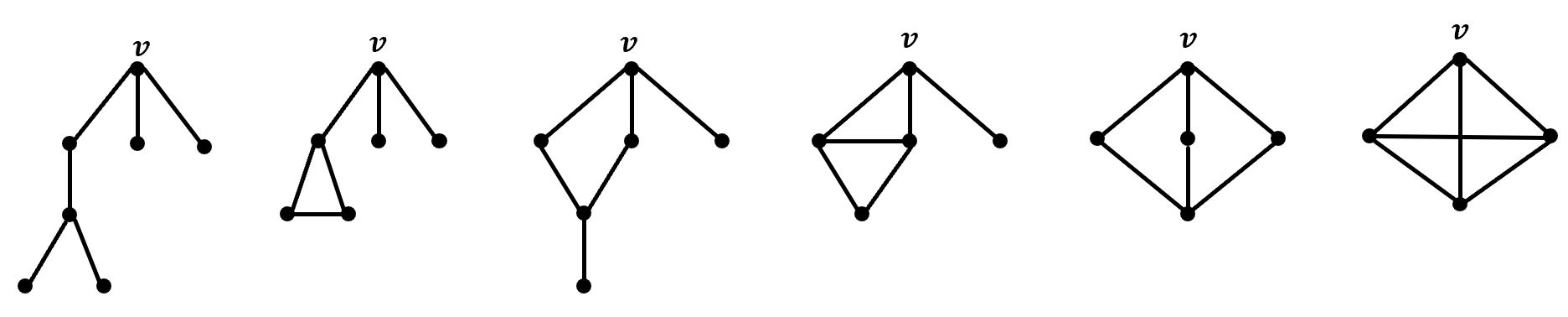}
      \caption{$G[N_{G}[v]\cup A]$}\label{f2.1}
    \end{figure}
    
    We only need to consider the first five cases since the last case is what we need. Color a maximum matching of $H_{1}- A$ by color $1$ and color another maximum matching of $H_{1}- A$ by color $2$. Moreover, color $E_{G}(G[N_{G}(v)\cup A])$ as Figure \ref{f2.2} shows. Let $G_{3}'$ be the graph induced by colored edges. Note that $G'_{3}\not\rightarrow (2K_{1, 2}, K_{1, 2}, K_{1, 1}, K_{1, 1}, \dots, K_{1, 1})$. Moreover, there are $a- t- i- j= a- t- 1$ uncolored edges since $i+ j= 1$. Thus, $G\not\rightarrow \left(a_{1}K_{1, 2}, a_{2}K_{1, 2}, a_{3}K_{1, 1}, a_{4}K_{1, 1}, \dots, a_{t}K_{1, 1}\right)$ by Fact \ref{fact2.3}. This is a contradiction.
    
    \begin{figure}[H]
      \centering
      \includegraphics[scale=0.4]{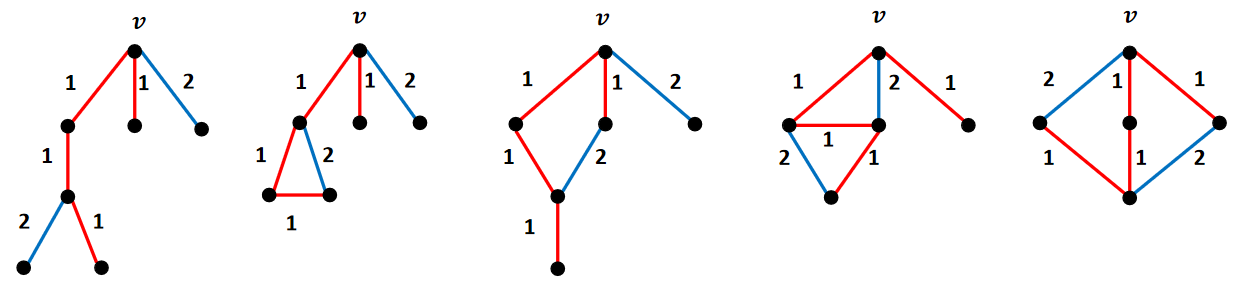}
      \caption{$G[N_{G}[v]\cup A]\not\rightarrow (2K_{1, 2}, K_{1, 2}, K_{1, 1}, K_{1, 1}, \dots, K_{1, 1})$}\label{f2.2}
    \end{figure}
    
    {\bf Case 1.2}. There are two components of $H_{1}$ such that each of them contains at least one vertex of $N_{G}(v)$.
    
    Denote the vertex set of these components by $B_{1}$ and $B_{2}$, respectively. Moreover, let $B= B_{1}\cup B_{2}$. Note that $H_{1}[B_{1}]\neq K_{4}$ and $H_{1}[B_{2}]\neq K_{4}$ since $\Delta(G)= 3$. Thus, $H_{1}- B= (c- i)K_{3}\bigsqcup (a- t- c- j)K_{1, 3}$ or $H_{1}- B= (c- i)K_{3}\bigsqcup c'K_{4}\bigsqcup (a- t- c- 2c'- j)K_{1, 3}$ if $a_{1}= 2$, where $i$ and $j$ are nonnegative integers such that $i+ j= 2$. Moreover, $G[N_{G}[v]\cup B]$ is one of the graphs in Figure \ref{f2.3}.
    
    \begin{figure}[H]
      \centering
      \includegraphics[scale=0.5]{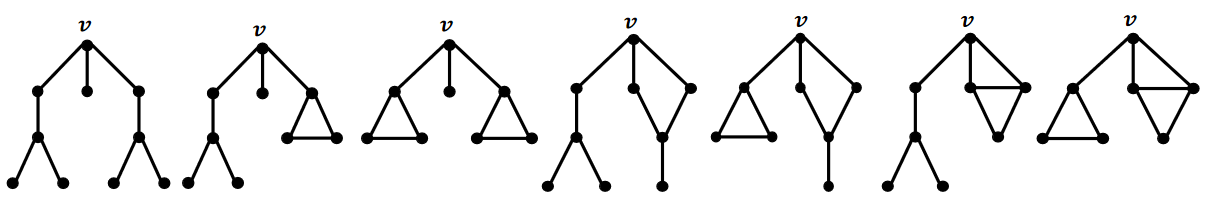}
      \caption{$G[N_{G}[v]\cup B]$}\label{f2.3}
    \end{figure}
    
    Note that $a\geq t+ 2$ since the number of components of $H_{1}- B$ is nonnegative and $i+ j= 2$. Thus, $a_{1}\geq 3$, or $a_{1}= a_{2}= 2$, or $a_{1}= 2, a_{2}= 1$ and $a_{3}\geq 2$. Color a maximum matching of $H_{1}- B$ by color $1$ and color another maximum matching of $H_{1}- B$ by color $2$. If $a_{1}\geq 3$, then color $E_{G}(G[N_{G}(v)\cup B])$ as Figure \ref{f2.4} shows. Let $G_{4}'$ be the graph induced by colored edges. Note that $G'_{4}\not\rightarrow (3K_{1, 2}, K_{1, 2}, K_{1, 1}, K_{1, 1}, \dots, K_{1, 1})$. If $a_{1}= a_{2}= 2$, then color $E_{G}(G[N_{G}(v)\cup B])$ as Figure \ref{f2.5} shows. Let $G_{5}'$ be the graph induced by colored edges. Note that $G'_{5}\not\rightarrow (2K_{1, 2}, 2K_{1, 2}, K_{1, 1}, K_{1, 1}, \dots, K_{1, 1})$. If $a_{1}= 2, a_{2}= 1$ and $a_{3}\geq 2$, then color $E_{G}(G[N_{G}(v)\cup B])$ as Figure \ref{f2.6} shows. Let $G_{6}'$ be the graph induced by colored edges. Note that $G'_{6}\not\rightarrow (2K_{1, 2}, K_{1, 2}, 2K_{1, 1}, K_{1, 1}, \dots, K_{1, 1})$.
    
    \begin{figure}[H]
      \centering
      \includegraphics[scale=0.5]{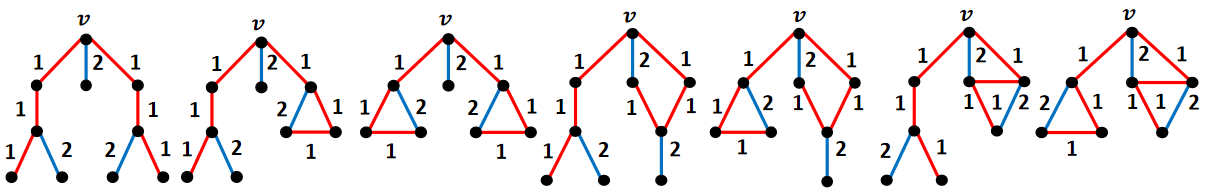}
      \caption{$G[N_{G}(v)\cup B]\not\rightarrow (3K_{1, 2}, K_{1, 2}, K_{1, 1}, K_{1, 1}, \dots, K_{1, 1})$}\label{f2.4}
    \end{figure}
    
    \begin{figure}[H]
      \centering
      \includegraphics[scale=0.5]{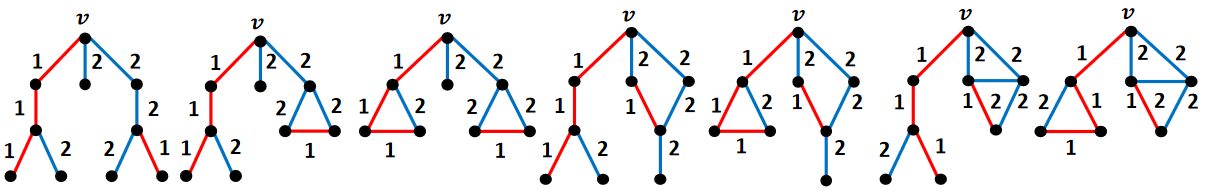}
      \caption{$G[N_{G}(v)\cup B]\not\rightarrow (2K_{1, 2}, 2K_{1, 2}, K_{1, 1}, K_{1, 1}, \dots, K_{1, 1})$}\label{f2.5}
    \end{figure}
    
    \begin{figure}[H]
      \centering
      \includegraphics[scale=0.5]{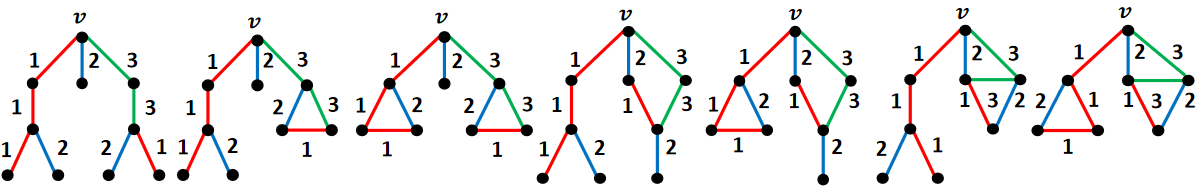}
      \caption{$G[N_{G}(v)\cup B]\not\rightarrow (2K_{1, 2}, K_{1, 2}, 2K_{1, 1}, K_{1, 1}, \dots, K_{1, 1})$}\label{f2.6}
    \end{figure}
    
    In both cases, there are $a- t- i- j= a- t- 2$ uncolored edges since $i+ j= 2$. Thus, $G\not\rightarrow \left(a_{1}K_{1, 2}, a_{2}K_{1, 2}, a_{3}K_{1, 1}, a_{4}K_{1, 1}, \dots, a_{t}K_{1, 1}\right)$ by Fact \ref{fact2.3}. This is a contradiction.
    
    {\bf Case 1.3}. There are three components of $H_{1}$ such that each of them contains at least one vertex of $N_{G}(v)$.
    
    Denote the vertex set of these components by $C_{1}, C_{2}$ and $C_{3}$, respectively. Moreover, let $C= C_{1}\cup C_{2}\cup C_{3}$. Note that $H_{1}[C_{1}]\neq K_{4}, H_{1}[C_{2}]\neq K_{4}$ and $H_{1}[C_{3}]\neq K_{4}$ since $\Delta(G)= 3$. Thus, $H_{1}- C= (c- i)K_{3}\bigsqcup (a- t- c- j)K_{1, 3}$ or $H_{1}- C= (c- i)K_{3}\bigsqcup c'K_{4}\bigsqcup (a- t- c- 2c'- j)K_{1, 3}$ if $a_{1}= 2$, where $i$ and $j$ are nonnegative integers such that $i+ j= 3$. Moreover, $G[N_{G}[v]\cup C]$ is one of the graphs in Figure \ref{f2.7}.
    
    \begin{figure}[H]
      \centering
      \includegraphics[scale=0.4]{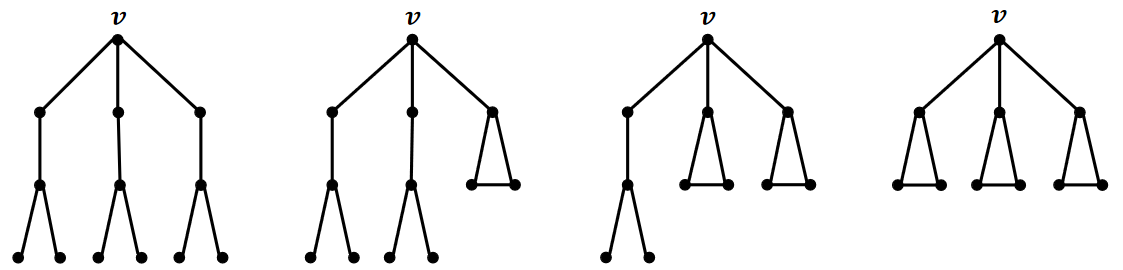}
      \caption{$G[N_{G}[v]\cup C]$}\label{f2.7}
    \end{figure}
    
    Note that $a\geq t+ 3$ since the number of components of $H_{1}- C$ is nonnegative and $i+ j= 3$. Thus, $a_{1}\geq 4$, or $a_{1}=3$ and $a_{2}= 2$, or $a_{1}= 3, a_{2}= 1$ and $a_{3}\geq 2$, or $a_{1}= a_{2}= a_{3}= 2$, or $a_{1}= 2, a_{2}= 1$ and $a_{3}\geq 3$, or $a_{1}= 2, a_{2}= 1$ and $a_{3}= a_{4}= 2$. Color a maximum matching of $H_{1}- B$ by color $1$ and color another maximum matching of $H_{1}- B$ by color $2$. If $a_{1}\geq 4$, then color $E_{G}(G[N_{G}(v)\cup C])$ as Figure \ref{f2.8} shows. Let $G_{7}'$ be the graph induced by colored edges. Note that $G'_{7}\not\rightarrow (4K_{1, 2}, K_{1, 2}, K_{1, 1}, K_{1, 1}, \dots, K_{1, 1})$. If $a_{1}= 3$ and $a_{2}= 2$, then color $E_{G}(G[N_{G}(v)\cup C])$ as Figure \ref{f2.9} shows. Let $G_{8}'$ be the graph induced by colored edges. Note that $G'_{8}\not\rightarrow (3K_{1, 2}, 2K_{1, 2}, K_{1, 1}, K_{1, 1}, \dots, K_{1, 1})$. If $a_{1}= 3, a_{2}= 1$ and $a_{3}\geq 2$, then color $E_{G}(G[N_{G}(v)\cup C])$ as Figure \ref{f2.10} shows. Let $G_{9}'$ be the graph induced by colored edges. Note that $G'_{9}\not\rightarrow (3K_{1, 2}, K_{1, 2}, 2K_{1, 1}, K_{1, 1}, \dots, K_{1, 1})$. If $a_{1}= a_{2}= a_{3}= 2$, then color $E_{G}(G[N_{G}(v)\cup C])$ as Figure \ref{f2.11} shows. Let $G_{10}'$ be the graph induced by colored edges. Note that $G'_{10}\not\rightarrow (2K_{1, 2}, 2K_{1, 2}, 2K_{1, 1}, K_{1, 1}, \dots, K_{1, 1})$. If $a_{1}= 2, a_{2}= 1$ and $a_{3}\geq 3$, then color $E_{G}(G[N_{G}(v)\cup C])$ as Figure \ref{f2.12} shows. Let $G_{11}'$ be the graph induced by colored edges. Note that $G'_{11}\not\rightarrow (2K_{1, 2}, K_{1, 2}, 3K_{1, 1}, K_{1, 1}, \dots, K_{1, 1})$. If $a_{1}= 2, a_{2}= 1$ and $a_{3}= a_{4}= 2$, then color $E_{G}(G[N_{G}(v)\cup C])$ as Figure \ref{f2} shows. Let $G_{12}'$ be the graph induced by colored edges. Note that $G'_{12}\not\rightarrow (2K_{1, 2}, K_{1, 2}, 2K_{1, 1}, 2K_{1, 1}, K_{1, 1}, \dots, K_{1, 1})$.
    
    \begin{figure}[H]
      \centering
      \includegraphics[scale=0.4]{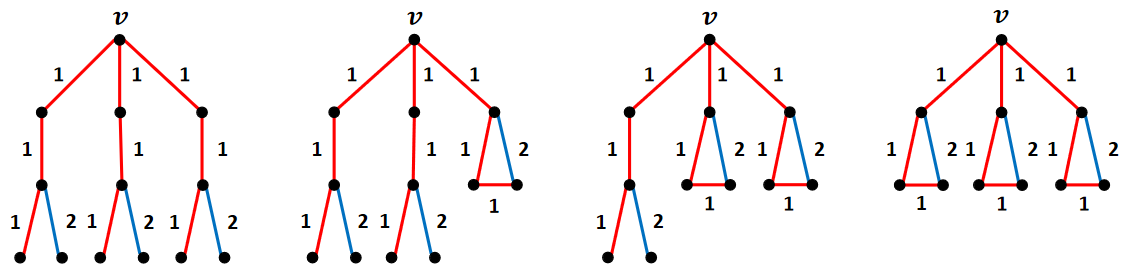}
      \caption{$G[N_{G}(v)\cup C]\not\rightarrow (4K_{1, 2}, K_{1, 2}, K_{1, 1}, K_{1, 1}, \dots, K_{1, 1})$}\label{f2.8}
    \end{figure}
    
    \begin{figure}[H]
      \centering
      \includegraphics[scale=0.4]{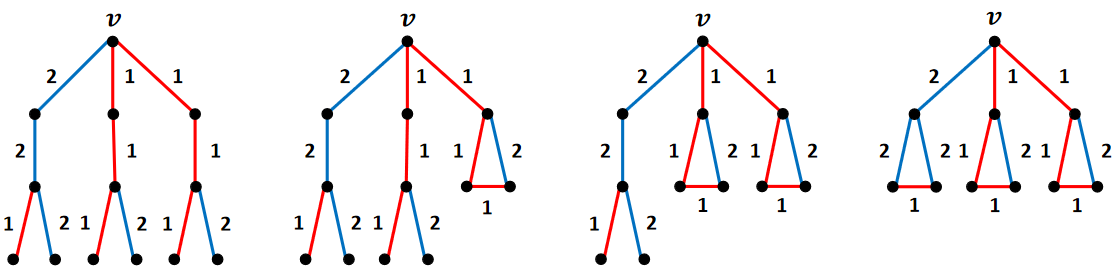}
      \caption{$G[N_{G}(v)\cup C]\not\rightarrow (3K_{1, 2}, 2K_{1, 2}, K_{1, 1}, K_{1, 1}, \dots, K_{1, 1})$}\label{f2.9}
    \end{figure}
    
    \begin{figure}[H]
      \centering
      \includegraphics[scale=0.4]{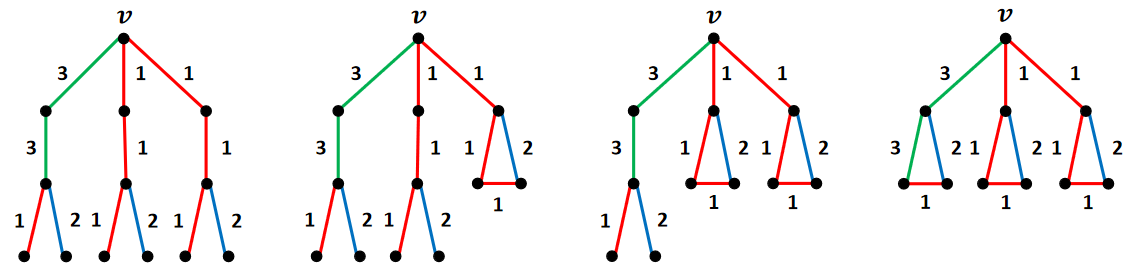}
      \caption{$G[N_{G}(v)\cup C]\not\rightarrow (3K_{1, 2}, K_{1, 2}, 2K_{1, 1}, K_{1, 1}, \dots, K_{1, 1})$}\label{f2.10}
    \end{figure}
    
    \begin{figure}[H]
      \centering
      \includegraphics[scale=0.4]{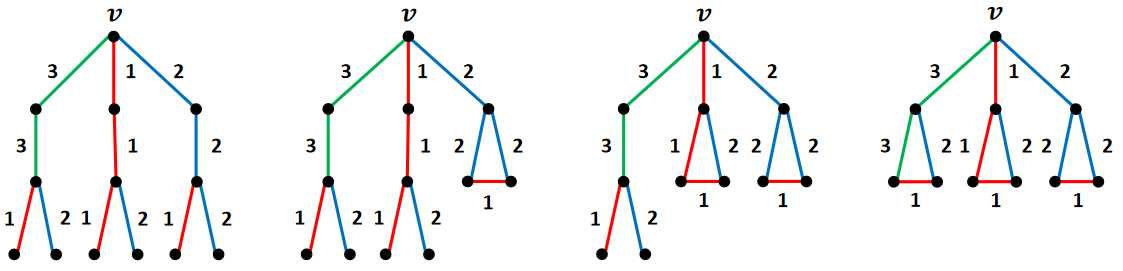}
      \caption{$G[N_{G}(v)\cup C]\not\rightarrow (2K_{1, 2}, 2K_{1, 2}, 2K_{1, 1}, K_{1, 1}, \dots, K_{1, 1})$}\label{f2.11}
    \end{figure}
    
    \begin{figure}[H]
      \centering
      \includegraphics[scale=0.4]{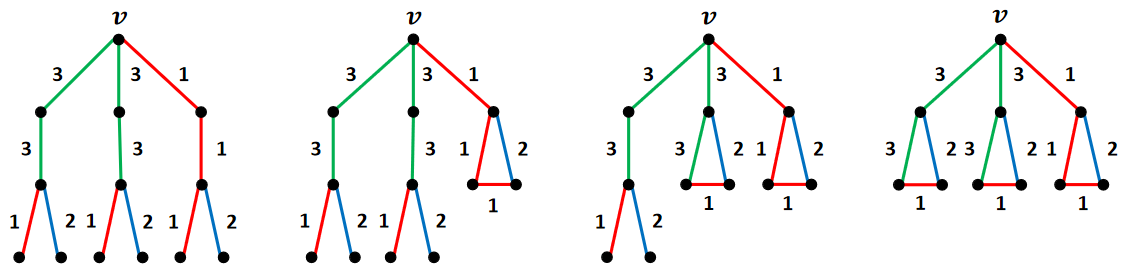}
      \caption{$G[N_{G}(v)\cup C]\not\rightarrow (2K_{1, 2}, K_{1, 2}, 3K_{1, 1}, K_{1, 1}, \dots, K_{1, 1})$}\label{f2.12}
    \end{figure}
    
    \begin{figure}[H]
      \centering
      \includegraphics[scale=0.4]{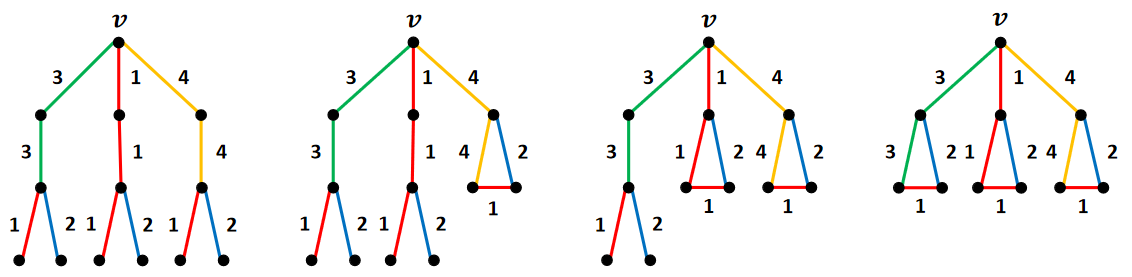}
      \caption{$G[N_{G}(v)\cup C]\not\rightarrow (2K_{1, 2}, K_{1, 2}, 2K_{1, 1}, 2K_{1, 1}, K_{1, 1}, \dots, K_{1, 1})$}\label{f2}
    \end{figure}
    
    In both cases, there are $a- t- i- j= a- t- 3$ uncolored edges since $i+ j= 3$. Thus, $G\not\rightarrow \left(a_{1}K_{1, 2}, a_{2}K_{1, 2}, a_{3}K_{1, 1}, a_{4}K_{1, 1}, \dots, a_{t}K_{1, 1}\right)$ by Fact \ref{fact2.3}. This is a contradiction.
    
    {\bf Case 2}. $a_{1}= 1$.
    
    In this case, we only need to show that $G= cK_{3}\bigsqcup c'K_{4}\bigsqcup (a- t+ 1- c- 2c')K_{1, 3}$ for some nonnegative integers $c$ and $c'$ such that $1\leq c+ 2c'\leq a- t+ 1$. Note that $a_{3}\geq 2$ by our assumption, and thus $G- \{v\}\rightarrow \left(K_{1, 2}, K_{1, 2}, (a_{3}- 1)K_{1, 1}, a_{4}K_{1, 1}, \dots, a_{t}K_{1, 1}\right)$ by Lemma \ref{lemma2.2} since $a_{3}\geq 2$. Thus, $G- \{v\}$ is a size Ramsey minimal graph by Theorem \ref{theo1.4}. Moreover, by the induction hypothesis, $G- \{v\}= cK_{3}\bigsqcup c'K_{4}\bigsqcup (a- t- c- 2c')K_{1, 3}$. Delete all isolated vertices in $G- \{v\}$ and denote the resulting graph by $H_{2}$. If $N_{G}(v)\cap V(H_{2})= \emptyset$, then $G= cK_{3}\bigsqcup c'K_{4}\bigsqcup (a- t+ 1- c- 2c')K_{1, 3}$, and we are done. Suppose that $N_{G}(v)\cap V(H_{2})\neq\emptyset$. Note that there are at most three components of $H_{2}$ such that each of them contains at least one vertex of $N_{G}(v)$ since $d_{G}(v)= 3$.
    
    {\bf Case 2.1}. There is only one component of $H_{2}$ such that contains at least one vertex of $N_{G}(v)$.
    
    Denote the vertex set of the component by $A'$. Note that $H_{2}[A']\neq K_{4}$ since $\Delta(G)= 3$. Thus, $H_{2}- A'= (c- i)K_{3}\bigsqcup c'K_{4}\bigsqcup (a- t- c- 2c'- j)K_{1, 3}$, where $i$ and $j$ are nonnegative integers such that $i+ j= 1$. Moreover, $G[N_{G}[v]\cup A']$ is one of the graphs in Figure \ref{f2.1}. We only need to consider the first five cases since the last case is what we need. Color a maximum matching of $H_{2}- A'$ by color $1$ and color another maximum matching of $H_{2}- A'$ by color $2$. Moreover, color $E_{G}(G[N_{G}(v)\cup A'])$ as Figure \ref{f2.13} shows. Let $G_{13}'$ be the graph induced by colored edges. Note that $G'_{13}\not\rightarrow (K_{1, 2}, K_{1, 2}, 2K_{1, 1}, K_{1, 1}, \dots, K_{1, 1})$. Moreover, there are $c- i+ 2c'+ a- t- c- 2c'- j= a- t- 1$ uncolored edges since $i+ j= 1$. Thus, $G\not\rightarrow \left(K_{1, 2}, K_{1, 2}, a_{3}K_{1, 1}, a_{4}K_{1, 1}, \dots, a_{t}K_{1, 1}\right)$ by Fact \ref{fact2.3}. This is a contradiction.
    
    \begin{figure}[H]
      \centering
      \includegraphics[scale=0.4]{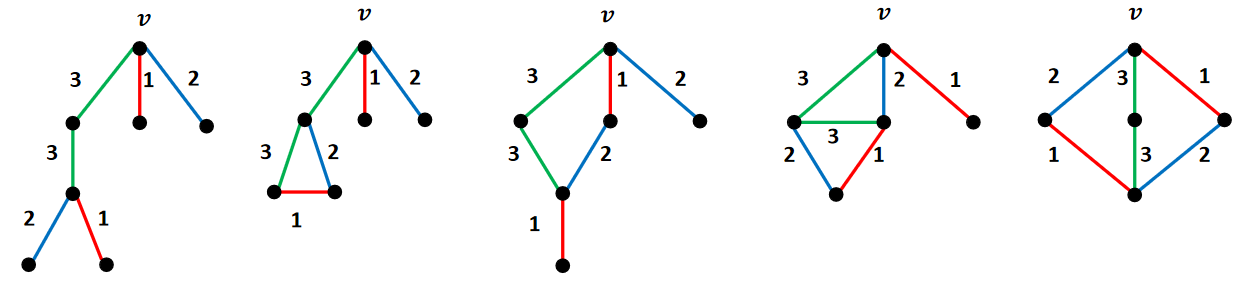}
      \caption{$G[N_{G}[v]\cup A']\not\rightarrow (K_{1, 2}, K_{1, 2}, 2K_{1, 1}, K_{1, 1}, \dots, K_{1, 1})$}\label{f2.13}
    \end{figure}
    
    {\bf Case 2.2}. There are two components of $H_{2}$ such that each of them contains at least one vertex of $N_{G}(v)$.
    
    Denote the vertex set of these components by $B_{1}'$ and $B_{2}'$, respectively. Moreover, let $B'= B_{1}'\cup B_{2}'$. Note that $H_{2}[B_{1}']\neq K_{4}$ and $H_{2}[B_{2}']\neq K_{4}$ since $\Delta(G)= 3$. Thus, $H_{2}- B'= (c- i)K_{3}\bigsqcup c'K_{4}\bigsqcup (a- t- c- 2c'- j)K_{1, 3}$, where $i$ and $j$ are nonnegative integers such that $i+ j= 2$. Moreover, $G[N_{G}[v]\cup B']$ is one of the graphs in Figure \ref{f2.3}. Note that $a\geq t+ 2$ since the number of components of $H_{2}- B'$ is nonnegative and $i+ j= 2$. Thus, $a_{3}\geq 3$, or $a_{3}= a_{4}= 2$. Color a maximum matching of $H_{2}- B$ by color $1$ and color another maximum matching of $H_{2}- B$ by color $2$. If $a_{3}\geq 3$, then color $E_{G}(G[N_{G}(v)\cup B'])$ as Figure \ref{f2.14} shows. Let $G_{14}'$ be the graph induced by colored edges. Note that $G'_{14}\not\rightarrow (K_{1, 2}, K_{1, 2}, 3K_{1, 1}, K_{1, 1}, \dots, K_{1, 1})$. If $a_{3}= a_{4}= 2$, then color $E_{G}(G[N_{G}(v)\cup B'])$ as Figure \ref{f2.15} shows. Let $G_{15}'$ be the graph induced by colored edges. Note that $G'_{15}\not\rightarrow (K_{1, 2}, K_{1, 2}, 2K_{1, 1}, 2K_{1, 1}, K_{1, 1}, \dots, K_{1, 1})$.
    
    \begin{figure}[H]
      \centering
      \includegraphics[scale=0.5]{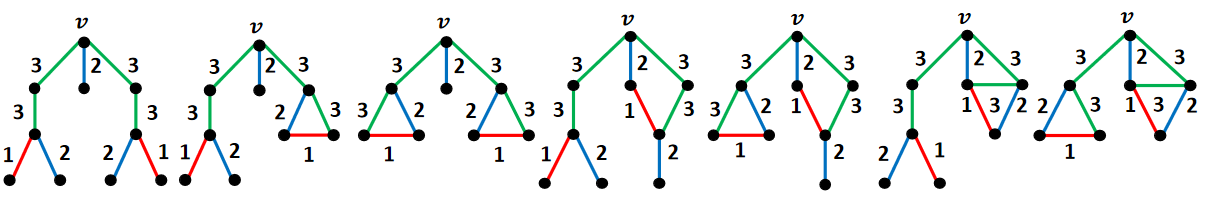}
      \caption{$G[N_{G}(v)\cup B']\not\rightarrow (K_{1, 2}, K_{1, 2}, 3K_{1, 1}, K_{1, 1}, \dots, K_{1, 1})$}\label{f2.14}
    \end{figure}
    
    \begin{figure}[H]
      \centering
      \includegraphics[scale=0.5]{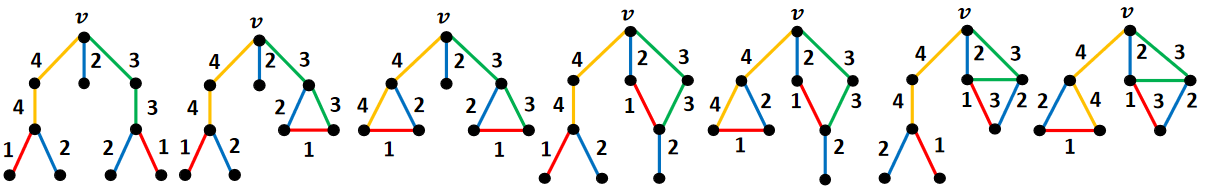}
      \caption{$G[N_{G}(v)\cup B']\not\rightarrow (K_{1, 2}, K_{1, 2}, 2K_{1, 1}, 2K_{1, 1}, K_{1, 1}, \dots, K_{1, 1})$}\label{f2.15}
    \end{figure}
    
    In both cases, there are $c- i+ 2c'+ a- t- c- 2c'- j= a- t- 2$ uncolored edges since $i+ j= 2$. Thus, $G\not\rightarrow \left(K_{1, 2}, K_{1, 2}, a_{3}K_{1, 1}, a_{4}K_{1, 1}, \dots, a_{t}K_{1, 1}\right)$ by Fact \ref{fact2.3}. This is a contradiction.
    
    {\bf Case 2.3}. There are three components of $H_{2}$ such that each of them contains at least one vertex of $N_{G}(v)$.
    
    Denote the vertex set of these components by $C_{1}', C_{2}'$ and $C_{3}'$, respectively. Moreover, let $C'= C_{1}'\cup C_{2}'\cup C_{3}'$. Note that $H_{2}[C_{1}']\neq K_{4}, H_{2}[C_{2}']\neq K_{4}$ and $H_{2}[C_{3}']\neq K_{4}$ since $\Delta(G)= 3$. Thus, $H_{2}- C'= (c- i)K_{3}\bigsqcup c'K_{4}\bigsqcup (a- t- c- 2c'- j)K_{1, 3}$, where $i$ and $j$ are nonnegative integers such that $i+ j= 3$. Moreover, $G[N_{G}[v]\cup C']$ is one of the graphs in Figure \ref{f2.7}. Note that $a\geq t+ 3$ since the number of components of $H_{2}- C'$ is nonnegative and $i+ j= 3$. Thus, $a_{3}\geq 4$, or $a_{3}= 3$ and $a_{4}= 2$, or $a_{3}= a_{4}= a_{5}= 2$. Color a maximum matching of $H_{2}- C'$ by color $1$ and color another maximum matching of $H_{2}- C'$ by color $2$. If $a_{3}\geq 4$, then color $E_{G}(G[N_{G}(v)\cup C'])$ as Figure \ref{f2.16} shows. Let $G_{16}'$ be the graph induced by colored edges. Note that $G'_{16}\not\rightarrow (K_{1, 2}, K_{1, 2}, 4K_{1, 1}, K_{1, 1}, \dots, K_{1, 1})$. If $a_{3}= 3$ and $a_{4}= 2$, then color $E_{G}(G[N_{G}(v)\cup C'])$ as Figure \ref{f2.17} shows. Let $G_{17}'$ be the graph induced by colored edges. Note that $G'_{17}\not\rightarrow (K_{1, 2}, K_{1, 2}, 3K_{1, 1}, 2K_{1, 1}, K_{1, 1}, \dots, K_{1, 1})$. If $a_{3}= a_{4}= a_{5}= 2$, then color $E_{G}(G[N_{G}(v)\cup C'])$ as Figure \ref{f2.18} shows. Let $G_{18}'$ be the graph induced by colored edges. Note that $G'_{18}\not\rightarrow (K_{1, 2}, K_{1, 2}, 2K_{1, 1}, 2K_{1, 1}, 2K_{1, 1}, K_{1, 1}, \dots, K_{1, 1})$.
    
    \begin{figure}[H]
      \centering
      \includegraphics[scale=0.4]{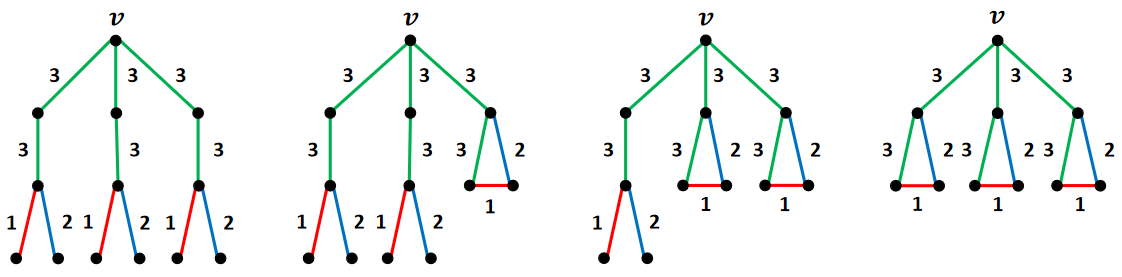}
      \caption{$G[N_{G}(v)\cup C]\not\rightarrow (K_{1, 2}, K_{1, 2}, 4K_{1, 1}, K_{1, 1}, \dots, K_{1, 1})$}\label{f2.16}
    \end{figure}
    
    \begin{figure}[H]
      \centering
      \includegraphics[scale=0.19]{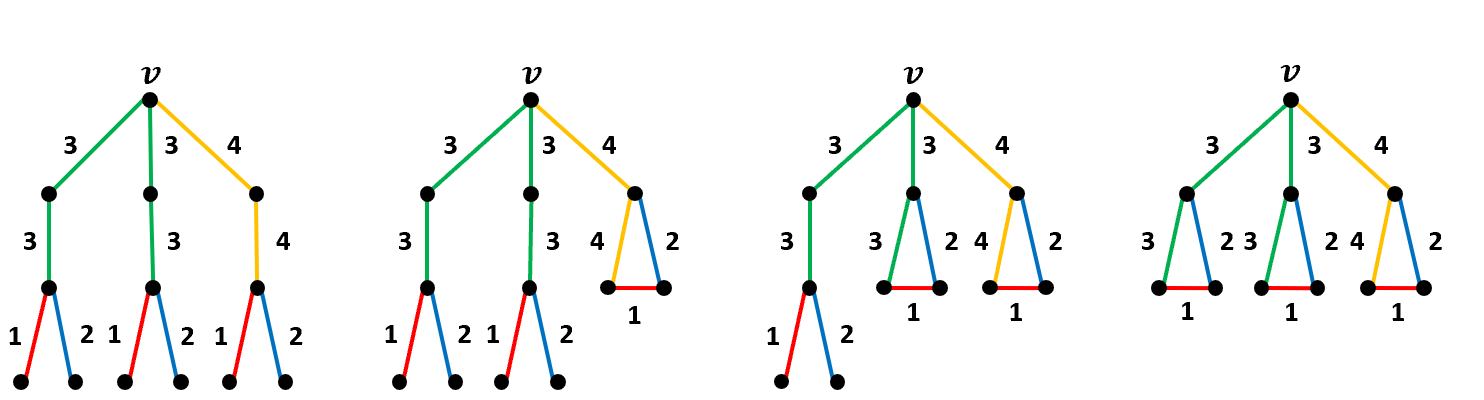}
      \caption{$G[N_{G}(v)\cup C]\not\rightarrow (K_{1, 2}, K_{1, 2}, 3K_{1, 1}, 2K_{1, 1}, K_{1, 1}, \dots, K_{1, 1})$}\label{f2.17}
    \end{figure}
    
    \begin{figure}[H]
      \centering
      \includegraphics[scale=0.4]{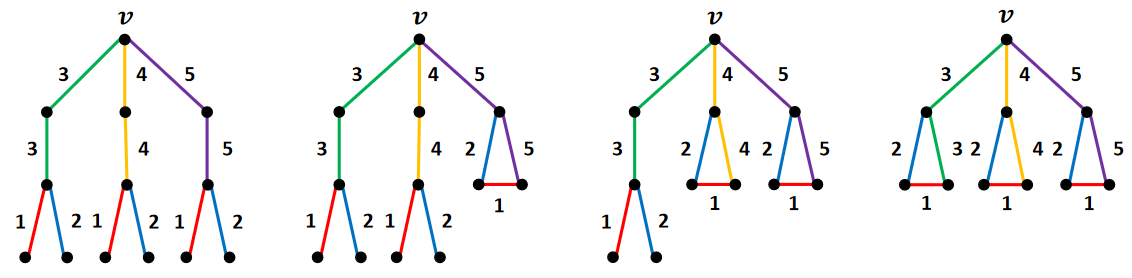}
      \caption{$G[N_{G}(v)\cup C]\not\rightarrow (K_{1, 2}, K_{1, 2}, 2K_{1, 1}, 2K_{1, 1}, 2K_{1, 1}, K_{1, 1}, \dots, K_{1, 1})$}\label{f2.18}
    \end{figure}
    
    In both cases, there are $c- i+ 2c'+ a- t- c- 2c'- j= a- t- 3$ uncolored edges since $i+ j= 3$. Thus, $G\not\rightarrow \left(K_{1, 2}, K_{1, 2}, a_{3}K_{1, 1}, a_{4}K_{1, 1}, \dots, a_{t}K_{1, 1}\right)$ by Fact \ref{fact2.3}. This is a contradiction.\q
  \end{proof}
  
  \begin{theo}\label{theo3.6}
    Suppose that $b_{1}\geq b_{2}\geq \cdots\geq b_{t}\geq 1$. Moreover, suppose that $b_{1}\geq 3$, or $b_{1}= b_{2}= \cdots= b_{\ell}= 2$ and $b_{\ell+ 1}= b_{\ell+ 2}= \cdots= b_{t}= 1$ for some $\ell\in [t]\backslash [2]$. If $G$ is a size Ramsey minimal graph for $(a_{1}K_{1, b_{1}}, a_{2}K_{1, b_{2}}, \dots, a_{t}K_{1, b_{t}})$, then $G= (a- t+ 1)K_{1, b}$, where $a= \sum_{s= 1}^{t} a_{s}$ and $b= \sum_{s= 1}^{t} b_{s}- t+ 1$.
  \end{theo}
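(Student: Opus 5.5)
We argue by induction on $a$, following the scheme of Theorems \ref{theo3.4} and \ref{theo3.5}. For $a= t$ the claim is Lemma \ref{lemma3.2}: under the hypothesis ($b_{1}\geq 3$, or at least three $b_{s}$ equal to $2$) the exceptional case $G= K_{3}$ does not occur, so $G= K_{1, b}$. Now let $a\geq t+ 1$. Choose $s$ with $a_{s}\geq 2$ and a vertex $v$ with $d(v)= \Delta(G)$; by Fact \ref{fact3.1}, $\Delta(G)\in\{b- 1, b\}$.

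\textbf{Step 1: $\Delta(G)= b$.} By Lemma \ref{lemma2.2} and Theorem \ref{theo1.4}, $G- \{v\}$ arrows the reduced tuple and $e(G- \{v\})\geq (a- t)b$. Since $e(G)= (a- t+ 1)b$, this forces $d(v)\geq b$ as well as $d(v)\leq b$, so $\Delta(G)= b$. Moreover, $G- \{v\}$ is itself size Ramsey minimal for the reduced tuple. The reduced tuple keeps the same $b_{i}$, so it still satisfies the hypothesis of the theorem, and the induction hypothesis gives $G- \{v\}= (a- t)K_{1, b}$ after removing isolated vertices. Call this graph $H$.

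\textbf{Step 2: $N_{G}(v)\cap V(H)= \emptyset$.} Once this is shown, $G= (a- t+ 1)K_{1, b}$ and we are done. Suppose instead that some neighbour of $v$ lies in $H$. Since $\Delta(G)= b$ and the centres of $H$ already have degree $b$, every such neighbour is a leaf of a star of $H$. Let $S_{1}, \dots, S_{k}$ ($1\leq k\leq b$) be the stars of $H$ meeting $N(v)$, and let $F= G[N[v]\cup V(S_{1})\cup\dots\cup V(S_{k})]$. The remaining $a- t- k$ stars are disjoint from $F$, and their centres lie outside $V(F)$. By Fact \ref{fact2.4} (with $p_{s}$ distributed along the reduced tuple), it suffices to produce a coloring of $F$ such that
$$F\not\rightarrow\left(a'_{1}K_{1, b_{1}}, \dots, a'_{t}K_{1, b_{t}}\right)\quad\text{with}\quad \sum_{s= 1}^{t} a'_{s}= t+ k,$$
for some choice of $a'_{s}\leq a_{s}$ with $a'_{s}\geq 1$. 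Concretely, $F$ has $(k+ 1)b$ edges, and we must color them so that color $s$ contains at most $a'_{s}- 1$ disjoint copies of $K_{1, b_{s}}$.

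\textbf{Step 3: the coloring of $F$.} I would color each star $S_{j}$ wholly in one color, using colors $s$ with surplus capacity $a_{s}- 1$, so that its centre is the only vertex of high degree in that color. The $b$ edges at $v$ are then split as $b_{s}- 1$ edges in each color $s$, with one extra edge. Each of the edges $vx$, where $x$ is a leaf of some $S_{j}$, raises the degree of $x$ in its color only to $2$. The key point is that the $b$ edges at $v$ carry at most $b_{s}- 1$ edges of each color except for one surplus edge. We place that surplus edge in a color $s$ with $b_{s}\geq 3$, or in a color $s$ with $b_{s}= 2$ when at least three colors have $b_{s}= 2$. The leaf $x$ then has color-$s$ degree at most $2< b_{s}$ in the first case. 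In the second case we must check that no new $K_{1, 2}$ arises beyond the allowed count: every new monochromatic star has to use either $v$ or a star centre, and the centres are already counted.

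\textbf{Main obstacle.} The difficulty is the small-value regime $b_{s}\in\{1, 2\}$. There a leaf $x$ adjacent to $v$ can itself become the centre of a new $K_{1, b_{s}}$, or $v$ and the centre of $S_{j}$ can simultaneously carry color-$s$ stars. This is exactly where the exceptional graphs $C_{4}$, $K_{3}$ and $K_{4}$ of Theorems \ref{theo3.4} and \ref{theo3.5} arise, and it is where the hypothesis $\ell\geq 3$ or $b_{1}\geq 3$ must be used. I would handle this regime by a careful count showing that the total number of disjoint monochromatic stars in $F$ is at most $k$ beyond one per color. The extra freedom provided by a third color with $b_{s}= 2$, or by $b_{1}\geq 3$, lets us route the surplus edge at $v$ so that no extra star is created.
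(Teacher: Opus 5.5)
Your proposal has a genuine gap in Step 1, and the coloring in Step 3 also fails.

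\textbf{Step 1 does not prove $\Delta(G)=b$.} From Lemma \ref{lemma2.2} and Theorem \ref{theo1.4} you get $e(G-\{v\})\geq (a-t)b$. This gives $d(v)=e(G)-e(G-\{v\})\leq b$, which is only the upper bound of Fact \ref{fact3.1}; nothing forces $d(v)\geq b$. Your Step 1 never uses the hypothesis ($b_{1}\geq 3$ or $\ell\geq 3$). So it would apply verbatim to $(2K_{1,2},K_{1,2},K_{1,1},\dots,K_{1,1})$, where $2K_{3}$ is size Ramsey minimal with $\Delta=2=b-1$. The case $\Delta(G)=b-1$ is therefore a real case, and it is exactly where the hypothesis is needed, not in the coloring of $F$. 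The paper rules it out as follows:
\begin{itemize}
\item Iterating the argument of Lemma \ref{lemma2.2} together with Fact \ref{fact2.1} yields an independent set $U$ of $a-t+1$ vertices, each of degree $b-1$.
\item By K\"onig's theorem, $G[U,W]$ has a proper $(b-1)$-edge-coloring; assign $b_{s}-1$ of its matchings to each color $s$.
\item Since $e(G[W])=a-t+1$, Fact \ref{fact2.3} forces $G[W]$ to be a matching $\{u_{i}w_{i}\}$, with each $u_{i}w_{i}$ meeting exactly $b-1$ edges of $G[U,W]$.
\item A recoloring lets one edge $u_{i}w_{i}$ be colored without creating a monochromatic star. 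When $b_{1}\geq 3$, this exchanges a matching between color $1$ and another color; when $\ell\geq 3$, it switches two colors along a path $u_{1}w_{1}v'v''$. Fact \ref{fact2.3} then gives the contradiction.
\end{itemize}
None of this appears in your proposal.

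\textbf{Step 3 fails even when $\Delta(G)=b$.} Your $F$ has $k+1$ vertices of degree $b$: $v$ and the centres $c_{j}$. Since $b=\sum_{s}(b_{s}-1)+1$, under any coloring each of them centres a monochromatic $K_{1,b_{s}}$ for some $s$. Your total budget is only $\sum_{s}(a'_{s}-1)=k$, so two same-colored stars must be \emph{forced} to intersect.
\begin{itemize}
\item Coloring $S_{j}$ entirely in one color gives $c_{j}$ all $b$ edges in that color, so its star can always dodge a single shared leaf.
\item Your surplus edge makes $v$ itself the centre of a monochromatic $K_{1,b_{s^{*}}}$; you only checked the degree of the leaf $x$.
\end{itemize}
Concretely, take $(b_{1},b_{2},b_{3})=(3,2,1)$, $(a_{1},a_{2},a_{3})=(2,1,1)$, and $G$ two copies of $K_{1,4}$ sharing one leaf. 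Only color $1$ has surplus, so $S_{1}$ is colored $1$. Color $3$ cannot be used at all, and $v$ can have at most one color-$2$ edge, so $v$ has at least three color-$1$ edges. The color-$1$ copies of $K_{1,3}$ at $v$ and at $c_{1}$ can then be chosen disjoint.

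The paper's fix is as follows:
\begin{itemize}
\item Keep only one star $A$ meeting $N(v)$, at a leaf $u$; all other stars go into Fact \ref{fact2.4}.
\item Color both edges at $u$ with a color $i$ having $a_{i}\geq 2$.
\item Split the remaining edges at $v$ and at the centre of $A$ into $b-1$ matchings. This is possible because they form a bipartite graph of maximum degree $b-1$, which is where $b\geq 3$ is used. Give $b_{s}-1$ of the matchings to color $s$.
\end{itemize}
Then $v$ and the centre of $A$ each have exactly $b_{i}$ color-$i$ edges, and every color-$i$ copy of $K_{1,b_{i}}$ in $G[N[v]\cup A]$ contains $u$. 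No other color contains a star at all.
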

  \begin{proof}  
    Note that $b\geq 3$ by our assumption. We use induction in $a$. The assertion holds for $a= t$ by Lemma \ref{lemma3.2}. Suppose that the assertion holds for $a- 1$ and $a\geq t+ 1$. Thus, there is an $i\in [t]$ such that $a_{i}\geq 2$. Moreover, $e(G)= (a- t+ 1)b$ by Theorem \ref{theo1.4}, and $b- 1\leq \Delta(G)\leq b$ by Fact \ref{fact3.1}. Let $v_{1}\in V(G)$ be a vertex such that $d_{G}(v_{1})= \Delta (G)$. In the following, we divide our discussion into two parts.
    
    {\bf Case 1}. $d_{G}(v_{1})= b$.
    
    Note that $e(G- \{v_{1}\})= e(G)- d(v_{1})= (a- t+ 1)b- b= (a- t)b$. Moreover, $G\rightarrow (a_{1}K_{1, b_{1}}, \dots, a_{i- 1}K_{1, b_{i- 1}}, (a_{i}- 1)K_{1, b_{i}}, a_{i+ 1}K_{1, b_{i+ 1}}, \dots, a_{t}K_{1, b_{t}})$ by Lemma \ref{lemma2.2} since $a_{i}\geq 2$. Thus, $G- \{v_{1}\}$ is a size Ramsey minimal graph by Theorem \ref{theo1.4}. Furthermore, $G- \{v_{1}\}= (a- t)K_{1, b}$ by the induction hypothesis. Delete all isolated vertices in $G- \{v_{1}\}$ and denote the resulting graph by $H$. If $N_{G}(v_{1})\cap V(H)= \emptyset$, then $G= (a- t+ 1)K_{1, b}$, and we are done. 
    
    Suppose that $N_{G}(v)\cap V(H)\neq\emptyset$. Note that the center of each copy of $K_{1, b}$ of $H$ is not belong to $N_{G}(v)$ since $\Delta(G)= b$. Let $A$ be the vertex set of $K_{1, b}$, which contains at least one vertex of $N_{G}(v)$. Moreover, denote one of the common vertices by $u$. Color all edges incident to $u$ by color $i$. Note that the uncolored edges of $G[N_{G}[v]\cup A]$ are the union of $b- 1$ matchings since $b\geq 3$. Color $b_{s}- 1$ matchings of the uncolored edges of $G[N_{G}[v]\cup A]$ by color $s$ for each $s\in [t]$. Let $G'_{1}$ be the graph induced by colored edges, and thus $G'_{1}\not\rightarrow (K_{1, b_{1}}, \dots, K_{1, b_{i- 1}}, 2K_{1, b_{i}}, K_{1, b_{i+ 1}}, \dots, K_{1, b_{t}})$. Note that $H- A= (a- t- 1)K_{1, b}$ and the centers not belong to $V(G_{1}')$ since $\Delta(G)= b$. Thus, $G\not\rightarrow (a_{1}K_{1, b_{1}}, a_{2}K_{1, b_{2}}, \dots, a_{t}K_{1, b_{t}})$ by Fact \ref{fact2.4}. This is a contradiction.
    
    {\bf Case 2}. $d_{G}(v_{1})= b- 1$.
    
    Let $G_{1}= G- \{v_{1}\}$. Then
    $$G_{1}\rightarrow (a_{1}K_{1, b_{1}}, \dots, a_{i- 1}K_{1, b_{i- 1}}, (a_{i}- 1)K_{1, b_{i}}, a_{i+ 1}K_{1, b_{i+ 1}}, \dots, a_{t}K_{1, b_{t}})$$
    by Lemma \ref{lemma2.2} since $a_{i}\geq 2$. If $\Delta(G_{1})\leq b- 2$, then $G_{1}\not\rightarrow (K_{1, b_{1}}, K_{1, b_{2}}, \dots, K_{1, b_{t}}) $ by Fact \ref{fact2.1}. This is a contradiction. Thus, $\Delta(G_{1})= b- 1$ since $\Delta(G)= d_{G}(v_{1})= b- 1$. Let $v_{2}\in V(G_{1})$ be a vertex such that $d_{G_{1}}(v_{2})= \Delta(G_{1})= b- 1$. Note that $v_{1}$ and $v_{2}$ are not adjacent since $\Delta(G)= b- 1$, and thus $d_{G}(v_{2})= d_{G_{1}}(v_{2})$. Repeating the process, we can select $a- t+ 1$ vertices ($v_{1}, v_{2}, \dots, v_{a- t+ 1}$ and denote them by $U$) step by step such that $G[U]= \emptyset$ and $d_{G}(v_{i})= b- 1$ for each $i\in [a- t+ 1]$. Let $W= V(G)- U$. Note that $G[U, W]$ is a bipartite graph, and then $\chi'(G[U, W])= \Delta(G[U, W])= b- 1$ by the K{\"o}nig Theorem \cite{konig}. Thus, there is a proper edge coloring of $G[U, W]$ with $b- 1$ colors (denote the colors by $1', 2', \dots, (b- 1)'$), that is, the graph induced by the edges in color $i'$ is a matching for each $i\in [b- 1]$. Let $b_{0}= 0$. For each $s\in [t]$, recolor the edges in colors of the color set $\left\{\sum_{i= 1}^{s} b_{i- 1}- s+ 2, \sum_{i= 1}^{s} b_{i- 1}- s+ 3, \dots, \sum_{i= 1}^{s} b_{i}- s\right\}$ by color $s$. Thus, there is no monochromatic copy of $K_{1, b_{s}}$ for each $s\in [t]$ in $G[U, W]$. Note that 
    $$e(G[W])= e(G)- e(G[U, W])= (a- t+ 1)b- (a- t+ 1)(b- 1)= a- t+ 1$$
    since $G[U]= \emptyset$. If at least two edges in $G[W]$ are incident, then color two of them by color $i$. Let $G'_{2}$ be the graph induced by colored edges. Note that 
    $$G'_{2}\not\rightarrow (K_{1, b_{1}}, \dots, K_{1, b_{i- 1}}, 2K_{1, b_{i}}, K_{1, b_{i+ 1}}, \dots, K_{1, b_{t}}).$$
    Moreover, there are $a- t+ 1- 2= a- t- 1$ uncolored edges. Thus, 
    $$G\not\rightarrow (a_{1}K_{1, b_{1}}, a_{2}K_{1, b_{2}}, \dots, a_{t}K_{1, b_{t}})$$
    by Fact \ref{fact2.3}. This is a contradiction.
    
    Thus, $G[W]$ is a matching and denote the edges by $\{u_{i}w_{i}: i\in [a- t+ 1]\}$. If there is an $i_{0}\in [a- t+ 1]$ such that at most $b- 2$ edges of $G[U, W]$ are incident to $u_{i_{0}}$ and $w_{i_{0}}$, then there is $s_{0}\in [t]$ such that at most $b_{s_{0}}- 2$ edges of $G[U, W]$ are incident to $u_{i}$ and $w_{i}$. Color $u_{i}w_{i}$ by color $s_{0}$. Let $G_{3}'$ be the graph induced by colored edges. Note that $G'_{3}\not\rightarrow (K_{1, b_{1}}, K_{1, b_{2}}, \dots, K_{1, b_{t}})$. Moreover, there are $e(G[W])- 1= a- t$ uncolored edges. Thus, $G\not\rightarrow (a_{1}K_{1, b_{1}}, a_{2}K_{1, b_{2}}, \dots, a_{t}K_{1, b_{t}})$ by Fact \ref{fact2.3}. This is a contradiction. Consequently, for each $i\in [a- t+ 1]$, $b- 1$ edges of $G[U, W]$ are incident to $u_{i}$ and $w_{i}$ since $e(G[U, W])= (b- 1)(a- t+ 1)$. Furthermore, at least one edge of $G[U, W]$ is incident to $u_{i}$ and at least one edge of $G[U, W]$ is incident to $w_{i}$ since $\Delta(G)= b- 1$.
    
    {\bf Subcase 2.1}. $b_{1}\geq 3$. 
    
    In fact, there is an $i_{1}\in [a- t+ 1]$ such that at most $b_{1}- 2$ edges of $G[U, W]$ in color $1$ are incident to $u_{i_{1}}$ and at most $b_{1}- 2$ edges of $G[U, W]$ in color $1$ are incident to $w_{i_{1}}$. Otherwise, for each $i\in [a- t+ 1]$, at least $b_{1}- 1$ edges of $G[U, W]$ in color $1$ are incident to $u_{i}$ or at least $b_{1}- 1$ edges of $G[U, W]$ in color $1$ are incident to $w_{i}$. Note that there are exactly $(a- t+ 1)(b_{1}- 1)$ edges of $G[U, W]$ in color $1$. Thus, no edge of $G[U, W]$ in color $1$ is incident to one of $u_{i}$ and $w_{i}$ for each $i\in [a- t+ 1]$. Without loss of generality, suppose that no edge of $G[U, W]$ in color $1$ is incident to $u_{1}$, and thus $b_{1}- 1$ edges of $G[U, W]$ in color $1$ are incident to $w_{1}$. Recall that at least one edge of $G[U, W]$ is incident to $u_{1}$. Note that in the original coloring, the edge is in color $j'$ for some $j\in [b- 1]\backslash [b_{1}- 1]$. Without loss of generality, suppose that $j= b- 1$. In the color set $\{1', 2', \dots, (b- 1)'\}$, recolor the edges in color $1'$ by color $t$ (which is colored by color $1$) and recolor the edges in color $(b- 1)'$ by color $1$ (which is colored by color $t$). After the new recoloring, in $G[U, W]$, $b_{1}- 2$ edges in color $1$ are incident to $u_{1}$ and one edge in color $1$ is incident to $w_{1}$. Moreover, there is still no monochromatic copy of $K_{1, b_{s}}$ for each $s\in [t]$ in $G[U, W]$. We finish the proof of the fact.
    
    Color $u_{i_{1}}w_{i_{1}}$ by color $1$. Let $G'_{4}$ be the graph induced by colored edges. Note that $G'_{4}\not\rightarrow (K_{1, b_{1}}, K_{1, b_{2}}, \dots, K_{1, b_{t}})$ since $b_{1}\geq 3$. Moreover, there are $e(G[W])- 1= a- t$ uncolored edges. Thus, $G\not\rightarrow (a_{1}K_{1, b_{1}}, a_{2}K_{1, b_{2}}, \dots, a_{t}K_{1, b_{t}})$ by Fact \ref{fact2.3}. This is a contradiction.
    
    {\bf Subcase 2.2}. $b_{1}= b_{2}= \cdots= b_{\ell}= 2$ and $b_{\ell+ 1}= b_{\ell+ 2}= \cdots= b_{t}= 1$ for some $\ell\in [t]\backslash [2]$.
    
    Suppose that there is $s_{1}\in [\ell]$ such the no edge of $G[U, W]$ in color $s_{1}$ is incident to $u_{1}$ and no edge of $G[U, W]$ in color $s_{1}$ is incident to $w_{1}$. Color $u_{1}w_{1}$ by color $s_{1}$. Let $G'_{5}$ be the graph induced by colored edges. Note that $G'_{5}\not\rightarrow (K_{1, b_{1}}, K_{1, b_{2}}, \dots, K_{1, b_{t}})$. Moreover, there are $e(G[W])- 1= a- t$ uncolored edges. Thus, $G\not\rightarrow (a_{1}K_{1, b_{1}}, a_{2}K_{1, b_{2}}, \dots, a_{t}K_{1, b_{t}})$ by Fact \ref{fact2.3}. This is a contradiction.
    
    Thus, for each $s\in [\ell]$, there is exactly one edge of $G[U, W]$ in color $s$ is incident to $u_{1}$ or $w_{1}$, since there are $b- 1= \ell$ edges of $G[U, W]$ are incident to $u_{1}$ and $w_{1}$, and there is no monochromatic copy of $K_{1, 2}$ in color $s$ for each $s\in [\ell]$ in $G[U, W]$. For convenience in the following, suppose that $d_{G}(u_{1})\geq d_{G}(w_{1})$. By our assumption, there is at least one color not appear (denote one of them by $k$) in those edges of $G[U, W]$ are incident to $u_{1}$ since at least one edge of $G[U, W]$ is incident to $w_{1}$. Moreover, at least two colors (denote two of them by $k'$ and $k''$) not appear in those edges of $G[U, W]$ are incident to $w_{1}$ since $d_{G}(u_{1})\geq d_{G}(w_{1})$ and $\ell\in [t]\backslash [2]$.
    
    Let $u_{1}w_{1}v'v''$ be a copy of $P_{4}$ in $G$. Note that $v'\in U$ and $v''\in W$ since $u_{1}, w_{1}\in U$, $G[U]= \emptyset$ and $G[W]$ is a matching. Moreover, we can select the vertices $v'$ and $v''$ such that $w_{1}v'$ is in color $k$ and $v'v''$ is in color $k'$ or $k''$ by our assumption. Without loss of generality, suppose that $v'v''$ is in color $k'$. Note that the graph induced by edges in color $s$ is a matching for each $s\in [\ell]$ by our coloring. We recolor $w_{1}v'$ by color $k'$ and $v'v''$ by color $k$. Thus, there is still no monochromatic copy of $K_{1, b_{s}}$ for each $s\in [t]$ in $G[U, W]$. Color $u_{1}w_{1}$ by color $k$. Let $G'_{6}$ be the graph induced by colored edges. Note that $G'_{6}\not\rightarrow (K_{1, b_{1}}, K_{1, b_{2}}, \dots, K_{1, b_{t}})$ since no edge of $G[U, W]$ in color $k$ is incident to $u_{1}$ and $w_{1}$. Moreover, there are $e(G[W])- 1= a- t$ uncolored edges. Thus, $G\not\rightarrow (a_{1}K_{1, b_{1}}, a_{2}K_{1, b_{2}}, \dots, a_{t}K_{1, b_{t}})$ by Fact \ref{fact2.3}. This is a contradiction.\q
  \end{proof}

\section*{Acknowledgments}
  The authors thank Akbar Davoodi for some helpful suggestions. The authors also thank Liying Kang and Yuejian Peng.
  
\section*{Conflict of interest}
The authors declare that they have no conflict of interest.

\section*{Data availability}
No data was used for the research described in the paper.


\begin{thebibliography}{JluR00}
        
    \bibitem{bls} C. Beke, A. Li, and J. Sahasrabudhe, The multicolour size Ramsey number of a path, arXiv:2511.16656.
        
    \bibitem{BurrErdosFaudreeRousseauSchelp} S. A. Burr, P. Erd\H{o}s, R. J. Faudree, C. C. Rousseau and R. H. Schelp, Ramsey-minimal graphs for multiple copies, \emph{Nederl. Akad. Wetensch. Indag. Math.}, \textbf{81} (1978), 187--195.
        
    \bibitem{Davoodi} A. Davoodi, R. Javadi, A. Kamranian and G. Raeisi, On a conjecture of Erd\H{o}s on size Ramsey number of star forests, \emph{Ars Math. Contemp.}, \textbf{25} (2025), \#P2.09.
        
    \bibitem{nk} N. Dragani{\'c} and K. Petrova, Size-Ramsey numbers of graphs with maximum degree three, \emph{J. Lond. Math. Soc.}, \textbf{111} (2025), e70116.
        
    \bibitem{ErdosFaudreeRousseauSchelp} P. Erd\H{o}s, R. J. Faudree, C. C. Rousseau and R. H. Schelp, The size Ramsey number, \emph{Period. Math. Hungar.}, \textbf{9} (1978), 145--161.
        
    \bibitem{FaudreeSchelp} R. J. Faudree and R. H. Schelp, A survey of results on the size Ramsey number, in: {\it Paul Erd\H{o}s and his mathematics, II (Budapest, 1999)}, J\'anos Bolyai Math. Soc., Budapest, volume 11 of {\it Bolyai Soc. Math. Stud.}, pp. 291--309, 2002.
        
    \bibitem{GyoriSchelp} E. Gy\H{o}ri and R. H. Schelp, Two-edge colorings of graphs with bounded degree in both colors, \emph{Discrete Math.}, \textbf{249} (2002), 105--110.
        
    \bibitem{JavadiOmidi} R. Javadi and G. Omidi, On a question of Erd\H{o}s and Faudree on the size Ramsey numbers, \emph{SIAM J. Discrete Math.}, \textbf{32} (2018), 2217--2228.
        
    \bibitem{konig} D. K{\" o}nig, {\" U}ber Graphen und ihre Anwendung auf Determinantentheorie und Mengenlehre, \emph{Math. Ann.}, \textbf{77} (1916), 453--465.

    \bibitem{Pikhurko} O. Pikhurko, Size Ramsey numbers of stars versus 3-chromatic graphs, \emph{Combinatorica}, \textbf{21} (2001), 403--412.
        
    \bibitem{k} K. Tikhomirov, On bounded degree graphs with large size-Ramsey numbers, \emph{Combinatorica}, \textbf{44} (2024), 9--14.
        
    \bibitem {VZ} V. Vizing, On an estimate of the chromatic class of a $p$-graph, \emph{Diskret. Analiz.}, {\bf 3}, 25--30 (1964).
    
    \bibitem{Zhang} K. Zhang, A note on the size Ramsey number for stars, \emph{J. Comb. Math. Comb. Comput.}, \textbf{11} (1992), 209--214.	

  \end{thebibliography}
\end{document}